\newtheorem{thm}{Theorem}[section]
\newtheorem{prop}[thm]{Proposition}
\newtheorem{cor}[thm]{Corollary}
\newtheorem{lem}[thm]{Lemma}
\newtheorem{defn}[thm]{Definition}
\newtheorem{remark}[thm]{Remark}
\newtheorem{example}[thm]{Example}
\newtheorem{assumption}{Assumption}
\makeatletter \@addtoreset{equation}{section} \makeatother
\renewcommand{\P}{\mathbb{P}}
\newcommand{\E}{\mathbb{E}}
\newcommand{\R}{\mathbb{R}}
\newcommand{\N}{\mathbb{N}}
\newcommand{\e}{\mathrm{e}}
\renewcommand{\d}{\mathrm{d}}
\newcommand{\m}{\mathfrak{m} }
\newcommand{\1}{{\bf 1}}
\newcommand{\eps}{{\varepsilon}}
\title{\large\bf {\boldmath$(1,p)$}-Sobolev spaces based on strongly local Dirichlet forms}
\author{
Kazuhiro Kuwae\thanks{Department of Applied Mathematics, Fukuoka University,
Fukuoka 814-0180, Japan ({\sf kuwae@}{\sf fukuoka-u.ac.jp}). Supported in part by JSPS Grant-in-Aid for Scientific Research (S) (No. 22H04942) and fund (No.~215001) from the Central Research Institute of Fukuoka University.}
}
\date{}
\begin{document}
\maketitle

\begin{abstract}
In the framework of quasi-regular strongly local Dirichlet form $(\mathscr{E},D(\mathscr{E}))$ on $L^2(X;\m)$ admitting 
minimal $\mathscr{E}$-dominant measure $\mu$, 
we construct a natural $p$-energy functional $(\mathscr{E}^{\,p},D(\mathscr{E}^{\,p}))$ on $L^p(X;\m)$
and $(1,p)$-Sobolev space 
$(H^{1,p}(X),\|\cdot\|_{H^{1,p}})$ for $p\in]1,+\infty[$. 
In this paper, we establish the Clarkson type inequality for $(H^{1,p}(X),\|\cdot\|_{H^{1,p}})$. As a consequence, $(H^{1,p}(X),\|\cdot\|_{H^{1,p}})$ is a uniformly convex Banach space, hence it is reflexive. 
Based on the reflexivity of $(H^{1,p}(X),\|\cdot\|_{H^{1,p}})$, 
we prove that (generalized) normal contraction operates on $(\mathscr{E}^{\,p},D(\mathscr{E}^{\,p}))$, which has been shown in the case of various concrete settings, but has not been 
proved for such general framework.  Moreover, we prove that 
$(1,p)$-capacity ${\rm Cap}_{1,p}(A)<\infty$ for open set $A$ 
admits an equilibrium potential $e_A\in D(\mathscr{E}^{\,p})$ with $0\leq e_A\leq 1$ $\m$-a.e. and $e_A=1$ $\m$-a.e.~on $A$. 
%
\end{abstract}

{\it Keywords}:  Dirichlet form, strongly local, quasi-regular, 
$\mathscr{E}$-dominant measure, minimal $\mathscr{E}$-dominant 
measure, normal contraction, carr\'e du champ operator, $(1,p)$-Sobolev space,  
\\  
{\it Mathematics Subject Classification (2020)}: Primary 31C25
; Secondary 30L15, 53C21, 58J35,
35K05, 42B05, 47D08
\section{Statement of Main Results}\label{sec:StatementMain}
Let $(X,\tau)$ be a topological Lusin space, i.e., a continuous injective image of a Polish 
space, endowed with a $\sigma$-finite Borel measure $\m$ on $X$ with full topological support. 
Let $(\mathscr{E},D(\mathscr{E}))$ be a quasi-regular 
symmetric strongly local Dirichlet space on $L^2(X;\m)$ 
and $(P_t)_{t\geq0}$ the associated symmetric sub-Markovian strongly continuous semigroup on $L^2(X;\m)$ (see \cite[Chapter IV, Definition~3]{MR} for the quasi-regularity and see \cite[Theorem~5.1(ii)]{Kw:func} for the strong locality). 
Then there exists an $\m$-symmetric special standard process ${\bf X}=(\Omega, X_t, \P_x)$ 
associated with $(\mathscr{E},D(\mathscr{E}))$, i.e. for $f\in L^2(X;\m)\cap \mathscr{B}(X)$, 
$P_tf(x)=\E_x[f(X_t)]$ $\m$-a.e.~$x\in X$ (see \cite[Chapter IV, Section~3]{MR}).

It is known that for $u,v\in D(\mathscr{E})\cap L^{\infty}(X;\m)$ there exists a unique signed finite Borel 
measure $\mu_{\langle u,v\rangle}$ on $X$ such that 
\begin{align*}
2\int_X\tilde{f}\d\mu_{\langle u,v\rangle}=\mathscr{E}(uf,v)+\mathscr{E}(vf,u)-\mathscr{E}(uv,f)\quad \text{ for }\quad u,v\in D(\mathscr{E})\cap L^{\infty}(X;\m).
\end{align*}
Here $\tilde{f}$ denotes the $\mathscr{E}$-quasi-continuous $\m$-version of $f$ (see \cite[Theorem~2.1.3]{FOT}, \cite[Chapter IV, Proposition~3.3(ii)]{MR}). 
We set $\mu_{\langle f\rangle}:=\mu_{\langle f,f\rangle}$ for $f\in D(\mathscr{E})\cap L^{\infty}(X;\m)$. 
Moreover, 
for $f,g\in D(\mathscr{E})$, there exists a signed finite measure $\mu_{\langle f,g\rangle}$ on $X$ such that 
$\mathscr{E}(f,g)=\mu_{\langle f,g\rangle}(X)$, hence $\mathscr{E}(f,f)=\mu_{\langle f\rangle}(X)$. 
A Borel measure $\mu$ on $X$ is 
said to be \emph{$\mathscr{E}$-dominant} if it is $\sigma$-finite and $\mu_{\langle f\rangle}\ll \mu$ for all 
$f\in D(\mathscr{E})$, and it is said to be \emph{minimal $\mathscr{E}$-dominant} if it is $\mathscr{E}$-dominant and $\mu\ll\nu$ for every $\mathscr{E}$-dominant Borel measure $\nu$ on $X$. 
Any two minimal $\mathscr{E}$-dominant measure are mutually equivalent. 
The existence of minimal $\mathscr{E}$-dominant measure $\mu$ is shown by Nakao~\cite[Lemma~2.2]{Na}, which is given by a finite smooth measure 
$\mu:=\sum_{k=1}^{\infty}\frac{\mu_{\langle f_k\rangle}}{2^k(1+\mu_{\langle f_k\rangle}(X))}$ for some countable $\mathscr{E}_1^{1/2}$-dense subcollection 
$\{f_k\}$ of $D(\mathscr{E})$.  
The class of $f\in D(\mathscr{E})$ for which $\mu_{\langle f\rangle}$ is minimal $\mathscr{E}$-dominant is dense in $D(\mathscr{E})$ (see 
\cite[Proposition~2.7]{HinoFractal}), and  
every minimal $\mathscr{E}$-dominant measure does not charge $\mathscr{E}$-exceptional sets (see \cite[Proposition~2.18]{DelloSuzuki:LademacherSobolevToLip}). Given any $\mathscr{E}$-dominant $\mu$, there exists a unique symmetric bilinear form $\Gamma_{\mu}:D(\mathscr{E})_e\times D(\mathscr{E})_e\to L^1(X;\mu)$ such that for every $f,g\in D(\mathscr{E})_e$,
\begin{align*}
\mathscr{E}(f,g)=\int_X\Gamma_{\mu}(f,g)\d\mu
\end{align*}
(see \cite[after Definition~1.13]{Braun:Tamed2021}).
Hereafter, we fix a minimal $\mathscr{E}$-dominant measure $\mu$. 
The calculus rules from \cite[Proposition~1.12]{Braun:Tamed2021} transfer accordingly to $\Gamma_{\mu}$ at the $\mu$-a.e.~level
as in the following: 
Take $u,v\in D(\mathscr{E})$. Then 
\begin{enumerate}
\item[(1)]\label{item:1} {\bf (Parallelogram law):} $\Gamma_{\mu}(u+v)+\Gamma_{\mu}(u-v)=2\Gamma_{\mu}(u)+2\Gamma_{\mu}(v)$ $\mu$-a.e.
\item[(2)]\label{item:2} {\bf (Strong local property):} $\1_G\Gamma_{\mu}(u,v)=0$ $\mu$-a.e. for all open sets $G\subset X$ on which $u$ is constant.
\item[(3)]\label{item:3} {\bf (Cauchy-Schwarz inequality):} We have $|\Gamma_{\mu}(u,v)|\leq\Gamma_{\mu}(u)^{\frac12}\Gamma_{\mu}(v)^{\frac12}$ $\mu$-a.e. In particular, $\Gamma_{\mu}(u+v)^{\frac12}\leq\Gamma_{\mu}(u)^{\frac12}+\Gamma_{\mu}(v)^{\frac12}$ $\mu$-a.e. and $|\Gamma_{\mu}(u)^{\frac12}-\Gamma_{\mu}(v)^{\frac12}|\leq\Gamma_{\mu}(u-v)^{\frac12}$ $\mu$-a.e.
\item[(4)]\label{item:4} {\bf (Truncation Property):} $\Gamma_{\mu}(u\lor v)=\1_{\{\tilde{u}\geq \tilde{v}\}}\Gamma_{\mu}(u)+\1_{\{\tilde{u}<\tilde{v}\}}\Gamma_{\mu}(v)$ $\mu$-a.e. and \\
$\Gamma_{\mu}(u\land v)=\1_{\{\tilde{u}\geq \tilde{v}\}}\Gamma_{\mu}(v)+\1_{\{\tilde{u}< \tilde{v}\}}\Gamma_{\mu}(u)$ $\mu$-a.e. 
\item[(5)]\label{item:5} {\bf (Contraction property):}  $\Gamma_{\mu}(0\lor u\land1)\leq\Gamma_{\mu}(u)$ $\mu$-a.e.
\item[(6)]\label{item:6} {\bf (Leibniz rule):}  $\Gamma_{\mu}(uv,w)=\tilde{u}\Gamma_{\mu}(v,w)+\tilde{v}\Gamma_{\mu}(u,w)$ $\mu$-a.e. for $u,v\in D(\mathscr{E})\cap L^{\infty}(X;\mu)$ and $w\in D(\mathscr{E})$.  
\item[(7)]\label{item:7} {\bf (Chain rule):} Let $\phi\in C^1(\R)$ with $\phi(0)=0$ and bounded derivative $\phi'$. Then $\phi(u)\in D(\mathscr{E})$ and $\Gamma_{\mu}(\phi(u),v)=\phi'(\tilde{u})\Gamma_{\mu}(u,v)$ $\mu$-a.e.
\end{enumerate}
We say that $(X,\mathscr{E},\m)$ or simply $\mathscr{E}$ admits a
 carr\'e-du-champ $\Gamma$ if $\m$ is $\mathscr{E}$-dominant in which case we abbreviate $\Gamma_{\m}$ by $\Gamma$ and term it the \emph{carr\'e du champ}, then $\m$ is already \emph{minimal} $\mathscr{E}$-dominant according to the remark after \cite[Definition~3.4]{DelloSuzuki:LademacherSobolevToLip}. 

Fix $p\in [1,+\infty[$. 
Let $(\mathscr{E},D(\mathscr{E}))$ be a quasi-regular strongly local Dirichlet form on $L^2(X;\m)$ admitting a minimal 
$\mathscr{E}$-dominant measure $\mu$.  
Set $\mathscr{D}_{1,p}:=\{u\in L^p(X;\m)\cap D(\mathscr{E})\mid \Gamma_{\mu}(u)^{\frac12}\in L^p(X;\mu)\}$ 
and fix a subspace $\mathscr{D}$ of $\mathscr{D}_{1,p}$.   
For $u\in \mathscr{D}$, we define the \emph{$p$-energy} $\mathscr{E}^{\,p}(u):=\|\Gamma_{\mu}(u)^{\frac12}\|_{L^p(X;\mu)}^p$.

\begin{assumption}\label{asmp:closability}
{\rm We assume the closability of $(\mathscr{E}^{\,p},\mathscr{D})$ on $L^p(X;\m)$, i.e. for any 
$\mathscr{E}^{\,p}$-Cauchy sequence $\{u_n\}\subset\mathscr{D}$ 
satisfying $u_n\to0$ in $L^p(X;\m)$ as $n\to\infty$, then 
$\mathscr{E}^{\,p}(u_n)\to0$ as $n\to\infty$. 
}
\end{assumption}
\begin{prop}\label{prop:closability}
Assumption~\ref{asmp:closability} is satisfied if $\m(X)+\mu(X)<\infty$ and $p\geq2$.
\end{prop}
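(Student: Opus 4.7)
The plan is to reduce the closability of $(\mathscr{E}^{\,p},\mathscr{D})$ in $L^p(X;\m)$ to the (automatic) closedness of the Dirichlet form $(\mathscr{E},D(\mathscr{E}))$ in $L^2(X;\m)$. Two ingredients make this reduction possible: the finite-mass assumptions $\m(X)<\infty$ and $\mu(X)<\infty$ together with $p\geq 2$ give continuous embeddings $L^p(X;\m)\hookrightarrow L^2(X;\m)$ and $L^p(X;\mu)\hookrightarrow L^2(X;\mu)$, while the reverse triangle inequality in item~(3) of the calculus rules lets us compare the $L^p$-norm of $\Gamma_\mu(u_n-u_m)^{1/2}$ with the $L^p$-difference of the $\Gamma_\mu(u_n)^{1/2}$.

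\textbf{Execution.} Given an $\mathscr{E}^{\,p}$-Cauchy sequence $\{u_n\}\subset\mathscr{D}$ with $u_n\to0$ in $L^p(X;\m)$, H\"older's inequality on $(X,\mu)$ yields
\[
\mathscr{E}(u_n-u_m)\;=\;\int_X\Gamma_\mu(u_n-u_m)\,\d\mu\;\leq\;\mu(X)^{(p-2)/p}\,\mathscr{E}^{\,p}(u_n-u_m)^{2/p}\;\longrightarrow\;0,
\]
and analogously $u_n\to 0$ in $L^2(X;\m)$. Since $(\mathscr{E},D(\mathscr{E}))$ is closed on $L^2(X;\m)$, it is closable there, so $\mathscr{E}(u_n)\to 0$; equivalently, $\Gamma_\mu(u_n)^{1/2}\to 0$ in $L^2(X;\mu)$, and after passing to a subsequence we may also assume $\Gamma_\mu(u_{n_k})^{1/2}\to 0$ $\mu$-a.e.

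\textbf{Upgrading to $L^p$.} Item~(3) gives $|\Gamma_\mu(u_n)^{1/2}-\Gamma_\mu(u_m)^{1/2}|\leq\Gamma_\mu(u_n-u_m)^{1/2}$ $\mu$-a.e. Taking $L^p(\mu)$-norms shows $\{\Gamma_\mu(u_n)^{1/2}\}$ is Cauchy in $L^p(X;\mu)$, hence converges to some $g\in L^p(X;\mu)$. The $\mu$-a.e.~behaviour along the subsequence identifies $g=0$ $\mu$-a.e., so $\mathscr{E}^{\,p}(u_n)=\|\Gamma_\mu(u_n)^{1/2}\|_{L^p(\mu)}^p\to 0$, which is the claimed closability.

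\textbf{Main obstacle.} No serious obstacle is expected; the only subtle point is that $\mathscr{E}^{\,p}$-Cauchyness of $\{u_n\}$ is not the same as Cauchyness of $\{\Gamma_\mu(u_n)^{1/2}\}$ in $L^p(\mu)$, and this is exactly the gap bridged by the reverse triangle inequality~(3). The hypothesis $p\geq 2$ enters only through the $L^p\hookrightarrow L^2$ embeddings; the case $1<p<2$ would require a genuinely different strategy and is outside the scope of this proposition.
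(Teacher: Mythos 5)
Your proof is correct and follows essentially the same route as the paper: use $\m(X)+\mu(X)<\infty$ and $p\geq 2$ with H\"older to pass to an $\mathscr{E}$-Cauchy sequence tending to $0$ in $L^2(X;\m)$, invoke closedness of $(\mathscr{E},D(\mathscr{E}))$ to get $\mathscr{E}(u_n)\to 0$, and identify the $L^p(X;\mu)$-limit of $\Gamma_\mu(u_n)^{1/2}$ (whose existence comes from the reverse triangle inequality in item~(3)) as zero via a.e./in-measure convergence. The only difference is cosmetic: you make explicit the role of inequality~(3) and the subsequence extraction, which the paper leaves implicit.
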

\begin{proof}[\bf Proof]
Suppose that $\{f_n\}\subset \mathscr{D}$ is an $\mathscr{E}^{\,p}$-Cauchy sequence such that 
$f_n\to0$ in $L^p(X;\m)$. Applying H\"older's inequality with $\m(X)+\mu(X)<\infty$ and $p\geq2$, 
we see that 
$\{f_n\}\subset \mathscr{D}$ is an $\mathscr{E}$-Cauchy sequence satisfying $f_n\to0$ in $L^2(X;\m)$. Then $\mathscr{E}(f_n,f_n)\to0$ as $n\to\infty$ by the closedness of $(\mathscr{E},D(\mathscr{E}))$ on 
$L^2(X;\m)$. From this, we see $\Gamma_{\mu}(f_n)\to0$ in $\mu$-measure as $n\to\infty$. 
On the other hand, there exists $v\in L^p(X;\mu)_+$ such that $\Gamma_{\mu}(f_n)^{\frac12}\to v$ in 
$L^p(X;\mu)$ as $n\to\infty$. This implies $v=0$ $\mu$-a.e., hence $\mathscr{E}^{\,p}(f_n)=\int_X\Gamma_{\mu}(f_n)^{\frac{p}{2}}\d\mu\to\int_X v^{\,p}\d\mu=0$ as $n\to\infty$.   
\end{proof}
\begin{remark}
{\rm Proposition~\ref{prop:closability} is also shown in \cite[Theorem~2.1]{HinzKochMeinert} under the condition that 
$\m$ is an $\mathscr{E}$-dominant measure, i.e., $\mathscr{E}$ admits  carr\'e-du-champ $\Gamma$.
}
\end{remark}
\begin{defn}[{\bf {\boldmath$(1,p)$}-Sobolev space {\boldmath$(H^{1,p}(X),\|\cdot\|_{H^{1,p}})$}}]\label{defn:SobolevSpace}
{\rm For $u\in \mathscr{D}$, we define $\|u\|_{H^{1,p}}^p:=\|u\|_{L^p(X;\m)}^p+
\mathscr{E}^{\,p}(u)$. Then $\|\cdot\|_{H^{1,p}}$ forms a norm on $\mathscr{D}$. 
We now define the Sobolev space $H^{1,p}(X)$ as the $\|\cdot\|_{H^{1,p}}$-completion of $\mathscr{D}$: 
\begin{align*}
\left\{\begin{array}{cl}H^{1,p}(X)&:=\overline{\mathscr{D}}^{\|\cdot\|_{H^{1,p}}},\\
\|u\|_{H^{1,p}}&:=\left(\|u\|_{L^p(X;\m)}^p+
\mathscr{E}^{\,p}(u)\right)^{\frac{1}{p}}\quad\text{ for }\quad u\in H^{1,p}(X).\end{array}\right.
\end{align*}
More precisely, for $u\in H^{1,p}(X)$, we can uniquely 
extend  $\Gamma_{\mu}$ to $u$ with $\Gamma_{\mu}(u)^{\frac12}\in L^p(X;\mu)$ 
as the $L^p$-limit of $\Gamma_{\mu}(u_n)^{\frac12}$ for its approximating sequence $\{u_n\}\subset \mathscr{D}$ 
in view of the inequality $|\sqrt{\Gamma_{\mu}(f)}-\sqrt{\Gamma_{\mu}(g)}|\leq\sqrt{\Gamma_{\mu}(f-g)}$ $\mu$-a.e. 
The quantity $\Gamma_{\mu}(u)^{\frac12}$ does not depend on the choice of approximating sequence $\{u_n\}\subset \mathscr{D}$ . 
We set $D(\mathscr{E}^{\,p}):=H^{1,p}(X)$ and  $\mathscr{E}^{\,p}(u):=\|\Gamma_{\mu}(u)^{\frac12}\|_{L^p(X;\mu)}^p$ for $u\in D(\mathscr{E}^{\,p})$, which is called the \emph{$p$-energy} of $u\in H^{1,p}(X)$. 
It is easy to see that for $u,v\in D(\mathscr{E}^{\,p})$, the Parallelogram law (1) above remains valid. 
We can define $\Gamma_{\mu}(u,v)$ as 
\begin{align*}
\Gamma_{\mu}(u,v):=\Gamma_{\mu}\left(\frac{u+v}{2}\right)-\Gamma_{\mu}\left(\frac{u-v}{2}\right)\in L^{\frac{p}{2}}(X;\mu),
\end{align*}
which forms a bilinear form $\Gamma_{\mu}(\cdot,\cdot):H^{1,p}(X)\times H^{1,p}(X)\to L^{\frac{p}{2}}(X;\mu)$ 
under the  Parallelogram law (1) above, 
and Cauchy-Schwarz inequality (3) above also remains valid for $u,v\in D(\mathscr{E}^{\,p})$.

For $u,v\in D(\mathscr{E}^{\,p})$ and $\alpha\geq0$, we define
\begin{align}
\mathscr{E}^{\,p}(u,v):&=\int_X\Gamma_{\mu}(u)^{\frac{p-2}{2}}\Gamma_{\mu}(u,v)\d\mu,\label{eq:CoEnergy}\\
\mathscr{E}^{\,p}_{\alpha}(u,v):&=\mathscr{E}^{\,p}(u,v)+\alpha \int_X|u|^{p-2}uv\,\d\m.\label{eq:PerCoEnergy} 
\end{align}
Clearly we see $\mathscr{E}^{\,p}_0(u,v)=\mathscr{E}^{\,p}(u,v)$ and the linearities of $v\mapsto \mathscr{E}^{\,p}(u,v)$ and $v\mapsto \mathscr{E}^{\,p}_{\alpha}(u,v)$. Moreover, we have 
\begin{align}
|\mathscr{E}^{\,p}(u,v)|&\leq \mathscr{E}^{\,p}(u)^{1-\frac{1}{p}}\mathscr{E}^{\,p}(v)^{\frac{1}{p}},\qquad 
|\mathscr{E}^{\,p}_{\alpha}(u,v)|\leq \mathscr{E}^{\,p}_{\alpha}(u)^{1-\frac{1}{p}}\mathscr{E}^{\,p}_{\alpha}(v)^{\frac{1}{p}}.
\label{eq:ModifiedHolder}
\end{align}
Indeed, the first inequality in \eqref{eq:ModifiedHolder} and the following inequality 
\begin{align}
\left|\alpha\int_X|u|^{p-2}uv\,\d\m\right|\leq\left(\alpha\int_X|u|^p\d\m \right)^{1-\frac{1}{p}}\left(\alpha\int_X|v|^p\d\m  \right)^{\frac{1}{p}}\label{eq:HolderInt}
\end{align}
are easy consequences of H\"older's inequality, and the second inquality in \eqref{eq:ModifiedHolder} follows the first one in \eqref{eq:ModifiedHolder} and \eqref{eq:HolderInt} by way of the usual H\"older's inequality in $\R^2$: $|x_1y_1+x_2y_2|\leq 
(|x_1|^p+|x_2|^p)^{1/p}(|y_1|^q+|y_2|^q)^{1/q}$, $1/p+1/q=1$.    
We call $\mathscr{E}^{\,p}_{\alpha}(u,v)$ the \emph{$\alpha$-order mutual energy of $u$ and $v$}. 
Note that $\mathscr{E}^{\,p}_{\alpha}(u,u)=\int_X\Gamma_{\mu}(u)^{\frac{p}{2}}\d\mu+\alpha\int_X|u|^p\d\m$, in particular, 
$\mathscr{E}^{\,p}_1(u,u)=\mathscr{E}^{\,p}_1(u)=
\|u\|_{H^{1,p}}^p$ and $\mathscr{E}^{\,p}(u,u)=\mathscr{E}^{\,p}(u)$. 
}
\end{defn}
\begin{remark}
{\rm The coenergy \eqref{eq:CoEnergy} and the inequality \eqref{eq:ModifiedHolder} are also noted in \cite{Beznea} under the existence of curr\'e du champ operator. These are considered for the $p$-energy 
associated with $p$-Laplacian 
over Euclidean space (see also \cite{Beusekom}).
}
\end{remark}
\begin{remark}
{\rm To well-define $(1,p)$-Sobolev space $(H^{1,p}(X),\|\cdot\|_{H^{1,p}})$, we need Assumption~\ref{asmp:closability}. Otherwise the unique continuous map $\overline{\iota}: H^{1,p}(X)\to L^p(X;\m)$ 
extending the inclusion map $\iota:\mathscr{D}\to L^p(X;\m)$ is not necessarily injective (see \cite[Chapter I Remark~3.2(ii)]{MR}). 
}
\end{remark}
\begin{remark}
{\rm The normed vector space $(\mathscr{D},\|\cdot\|_{1,p})$ with the following norm: 
\begin{align*}
\|u\|_{1,p}:=\|u\|_{L^p(X;\m)}+\|\Gamma_{\mu}(u)^{\frac12}\|_{L^p(X;\mu)},\quad u\in \mathscr{D}_{1,p}
\end{align*}
is defined in \cite[Notation~6.2.1]{BH} for $p\in[2,+\infty[$ under the condition that $\mu=\m$ is a minimal $\mathscr{E}$-dominant measure, i.e., 
$(\mathscr{E},D(\mathscr{E}))$ admits a carr\'e du champ $\Gamma=\Gamma_{\m}$. But, in \cite{BH}, there is no consideration for its completion. 
}
\end{remark}
\begin{remark}\label{rem:SobolevSpaces}
{\rm 
\begin{enumerate}
\item 
In \cite[Theorem~6.1]{HinzRoecknerTepryaev}, there is a similar construction of $(1,p)$-Sobolev space denoted by $(H^{1,p}_0(X),\|\cdot\|_{1,p})$ under $p>2$ and $\m(X)<\infty$ in the framework of strongly local regular Dirichlet form over locally compact separable metric space. 
The first inequality in \eqref{eq:ModifiedHolder} is also noted in \cite[Remark~6.1]{HinzRoecknerTepryaev}. 
\item There are various ways to define $(1,p)$-Sobolev space for metric measure spaces based on 
curr\'e du champ (see \cite{AGS_Sobolev, BjornBjorn, Gigli:NonSmoothDifferentialStr,GigliNobili}). 
But in the past literature, there is no synthetic treatment for the construction of such spaces based on the theory of strongly local quasi-regular Dirichlet forms admitting carr\'e du champ operator including the case of infinite dimensional spaces, like Wiener space, configuration spaces, interacting particle systems etc. Such an infinite dimensional space has been thought to be considered in the framework of extended metric measure spaces (see also \cite{AES, SavExtendedMetric}).  So the construction of $(1,p)$-Sobolev space based on the theory of strongly local quasi-regular Dirichlet forms is quite applicable. 
\item There is another way to define $(r,p)$-Sobolev spaces (including the case of $r=1$) as the  
Bessel potential space (see \cite{FukKane,HoJacob2004,JacobSchlling2004}). Note that our Sobolev space based on the 
curr\'e du champ operator is equivalent to the $(1,p)$-Sobolev spaces as the Bessel potential space 
 provided the Riesz operator satisfies the $L^p$-boundedness (see \cite[Theorem~1.4(2)]{EXKRiesz}).
\item There is a completely different approach to define $(1,p)$-Sobolev space on fractals (see \cite{HermanPeironeStrichartz, CaoGuQiu, Shimizu, MurganShiumizu}). 
It is unclear the relation with our Sobolev space $(H^{1,p}(X),\|\cdot\|_{H^{1,p}})$. 
\end{enumerate}   
}
\end{remark}

\bigskip

Our first main theorem is the Clarkson type inequality of the $(1,p)$-Sobolev space $(H^{1,p}(X),\|\cdot\|_{H^{1,p}})$. 

\begin{thm}[Clarkson's inequality]\label{thm:Clarkson}
Suppose $p\in]1,+\infty[$. 
The energy functional $\mathscr{E}^{\,p}_1$ satisfies the following inequalities of Clarkson type: for $\alpha\geq0$  and $f,g\in D(\mathscr{E}^{\,p})=H^{1,p}(X)$
\begin{align}
\mathscr{E}^{\,p\!}_{\alpha}\left(\frac{f+g}{2} \right)+\mathscr{E}^{\,p\!}_{\alpha}\left(\frac{f-g}{2} \right)&\leq\frac{\mathscr{E}^{\,p}_{\alpha}(f)+\mathscr{E}^{\,p}_{\alpha}(g)}{2},\quad p\in[2,+\infty[,\label{eq:Clarkson1}\\
\mathscr{E}^{\,p\!}_{\alpha}\left(\frac{f+g}{2} \right)^{\frac{1}{p-1}}+\mathscr{E}^{\,p\!}_{\alpha}\left(\frac{f-g}{2} \right)^{\frac{1}{p-1}}&\leq\left(\frac{\mathscr{E}^{\,p}_{\alpha}(f)+\mathscr{E}^{\,p}_{\alpha}(g)}{2} \right)^{\frac{1}{p-1}},\quad p\in]1,2].\label{eq:Clarkson2}
\end{align}
In particular, $(H^{1,p}(X),\|\cdot\|_{H^{1,p}})$ is a uniformly convex Banach space and consequently it is reflexive. 
\end{thm}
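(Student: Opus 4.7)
The plan is to lift $(H^{1,p}(X),\mathscr{E}^{\,p}_\alpha(\cdot)^{1/p})$ isometrically into a standard $L^p$-type Banach space and invoke the classical Clarkson inequalities there. For each pair $f,g \in D(\mathscr{E}^{\,p})$ one uses the pointwise Cauchy--Schwarz inequality for $\Gamma_\mu$ to build a \emph{linear} isometry of $\mathrm{span}\{f,g\}$ into such a space, which then transports Clarkson verbatim to $\mathscr{E}^{\,p}_\alpha$.

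Fix $f,g \in D(\mathscr{E}^{\,p})=H^{1,p}(X)$. Cauchy--Schwarz (property (3)) gives $\Gamma_\mu(f,g)^2 \leq \Gamma_\mu(f)\Gamma_\mu(g)$ $\mu$-a.e., so the $2\times 2$ Gram matrix is positive semidefinite $\mu$-a.e., and I would select $\mu$-measurable $\vec f,\vec g\colon X \to \R^2$ via an explicit Cholesky-type formula (e.g.\ $\vec f=(\sqrt{\Gamma_\mu(f)},0)$, with $\vec g$ determined on $\{\Gamma_\mu(f)>0\}$ and set to $(0,\sqrt{\Gamma_\mu(g)})$ on $\{\Gamma_\mu(f)=0\}$) such that $|\vec f|^2 = \Gamma_\mu(f)$, $|\vec g|^2 = \Gamma_\mu(g)$, $\vec f\cdot\vec g = \Gamma_\mu(f,g)$ $\mu$-a.e. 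For $u = \lambda f + \nu g \in \mathrm{span}\{f,g\}$, define $\vec u := \lambda\vec f + \nu\vec g$; bilinearity of $\Gamma_\mu$ yields $|\vec u|^2 = \Gamma_\mu(u)$ $\mu$-a.e. Then
\begin{equation*}
\Psi\colon \mathrm{span}\{f,g\} \to L^p(X;\m)\oplus L^p(X;\R^2;\mu),\qquad u\mapsto(\alpha^{1/p}u,\vec u),
\end{equation*}
with $\R^2$ carrying its Euclidean norm and the target the $p$-direct-sum norm, is \emph{linear} on $\mathrm{span}\{f,g\}$ and norm-preserving: $\|\Psi(u)\|^p = \mathscr{E}^{\,p}_\alpha(u)$. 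Moreover $\Psi((f\pm g)/2)=(\Psi(f)\pm\Psi(g))/2$.

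The target is a Bochner $L^p$-space with scalar and Hilbert values, uniformly convex for $1<p<\infty$ and satisfying both Clarkson inequalities; the Hilbert-valued summand reduces to scalar $L^p$ via the sphere representation $|\vec v|^p = c_p\int_{S^1}|\vec v\cdot\theta|^p\,d\theta$, so the whole target embeds linearly isometrically into a scalar $L^p$-space, for which Clarkson is classical. Applying Clarkson's (C1) for $p\in[2,\infty)$ resp.\ (C2) for $p\in(1,2]$ in the target to $\Psi(f),\Psi(g)$ yields \eqref{eq:Clarkson1} and \eqref{eq:Clarkson2}, noting $q/p = 1/(p-1)$ in the second case.

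Uniform convexity of $(H^{1,p}(X),\|\cdot\|_{H^{1,p}})$ then follows from either Clarkson inequality by the standard estimate: unit vectors $f,g$ with $\|f-g\|_{H^{1,p}}\geq\varepsilon$ force $\|(f+g)/2\|_{H^{1,p}}\leq 1-\delta(\varepsilon)$ for some $\delta(\varepsilon)>0$, and reflexivity by Milman--Pettis. The main technical point is the $\mu$-measurable construction of $(\vec f,\vec g)$, particularly a clean handling of $\{\Gamma_\mu(f)=0\}$; once this is in place the entire argument reduces to the classical scalar $L^p$ Clarkson inequalities.
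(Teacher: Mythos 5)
Your proposal is correct, but it proves the theorem by a genuinely different route than the paper. The paper works entirely pointwise: it first upgrades the parallelogram law to $\Gamma_{\mu}\bigl(\tfrac{u+v}{2}\bigr)+\Gamma_{\mu}\bigl(\tfrac{u-v}{2}\bigr)=\tfrac{\Gamma_{\mu}(u)+\Gamma_{\mu}(v)}{2}$ $\mu$-a.e.\ on $D(\mathscr{E}^{\,p})$, then derives \eqref{eq:Clarkson1} from the elementary inequality $\|x\|_p\leq\|x\|_2$ on $\R^2$ plus convexity of $t\mapsto t^{p/2}$, and derives \eqref{eq:Clarkson2} from the reverse pointwise estimate together with the reverse Minkowski inequality in $L^r(X;\mu)$ for $r=p/q\in\,]0,1[$; the case $\alpha>0$ with $p\in\,]1,2]$ then needs a separate convexity argument to merge the energy term with the $L^p(X;\m)$ term. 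You instead use the Cauchy--Schwarz inequality for $\Gamma_{\mu}$ to build a pointwise Cholesky factorization of the Gram matrix, obtaining a linear, $\mathscr{E}^{\,p}_{\alpha}$-isometric embedding of $\mathrm{span}\{f,g\}$ into $L^p(X;\m)\oplus_p L^p(X;\R^2;\mu)$, reduce the $\R^2$-valued summand to scalar $L^p$ by the sphere representation $|\vec v|^p=c_p\int_{S^1}|\vec v\cdot\theta|^p\,\d\theta$, and quote the classical scalar Clarkson inequalities. Both arguments rest on the same two structural facts (bilinearity of $\Gamma_{\mu}$ on $H^{1,p}(X)$, i.e.\ the parallelogram law, and Cauchy--Schwarz, both recorded in Definition~\ref{defn:SobolevSpace}), and your measurable selection is unproblematic since $\vec f,\vec g$ are Borel functions of $\Gamma_{\mu}(f),\Gamma_{\mu}(g),\Gamma_{\mu}(f,g)$ with the degenerate set $\{\Gamma_{\mu}(f)=0\}$ handled via $\Gamma_{\mu}(f,g)=0$ there. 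What your route buys is a uniform treatment of all $\alpha\geq0$ and both ranges of $p$ in one stroke --- the $L^p(X;\m)$ term is absorbed as an extra direct summand, so the paper's ad hoc combination step for $\alpha>0$, $p\in\,]1,2]$ disappears --- at the cost of importing the vector-valued reduction; the paper's route is more self-contained and exhibits the stronger pointwise ($\mu$-a.e.) inequality \eqref{eq:Clarkson*} behind \eqref{eq:Clarkson1}, which is of independent use.
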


%


Thanks to the reflexivity of $(H^{1,p}(X),\|\cdot\|_{H^{1,p}})$, we can establish that the
(generalized) normal contraction operates on $(H^{1,p}(X),\|\cdot\|_{H^{1,p}})$. 
For this, we define such notion below. 

\bigskip

For a given function $T:\R^N\to\R$, $T$ is said to be a \emph{(generalized) normal contraction} if 
\begin{align*}
T(0)=0\quad\text{ and }\quad |T(x)-T(y)|\leq \sum_{k=1}^N|x_k-y_k|
\end{align*}
for any $x=(x_1,\cdots,x_N)$, $y=(y_1,\cdots, y_N)\in\R^N$. 
When $N=1$, we call such $T:\R\to\R$ a \emph{normal contraction}. 
Note that $\phi(t):=t^{\sharp}:=0\lor t\land 1$, $\phi(t):=|t|$, $\phi(t):=t^+:=t\lor 0$ and $\phi(t):=t^-:=(-t)\lor0$ are typical examples of normal contractions. More generally, the real-valued function 
$\phi_{\eps}$ on $\R$ depending on $\eps>0$ such that $\phi_{\eps}(t)=t$ for $t\in[0,1]$, $-\eps\leq\phi_{\eps}(t)\leq 1+\eps$ for all $t\in\R$, and $0\leq\phi_{\eps}(t)-\phi_{\eps}(s)\leq t-s$ whenever $s<t$, is also a normal contraction.   

\begin{thm}\label{thm:Noramalcontrtaction1}
Suppose $p\in]1,+\infty[$ and $u=(u_1,\cdots,u_N)\in H^{1,p}(X)^N$. Let  $T:\R^N\to\R$ be a (generalized) normal contraction. Then $T(u)\in H^{1,p}(X)$ and 
$\mathscr{E}^{\,p}(T(u))^{\frac{1}{p}}\leq\sum_{k=1}^N\mathscr{E}^{\,p}(u_k)^{\frac{1}{p}}$, 
hence $\|T(u)\|_{H^{1,p}}\leq 2^{1-\frac{1}{p}}\sum_{k=1}^n\|u_k\|_{H^{1,p}}$. 
\end{thm}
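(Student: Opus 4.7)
The plan is to reduce to the smooth, chain-rule-accessible case by mollifying $T$, transfer a pointwise $\Gamma_{\mu}$-estimate to $L^p(X;\mu)$ by Minkowski's inequality, and then pass to the limit in the general case by invoking the reflexivity of $H^{1,p}(X)$ granted by Theorem~\ref{thm:Clarkson}. The $L^p(\m)$-bound $\|T(u)\|_{L^p(\m)}\leq\sum_k\|u_k\|_{L^p(\m)}$ is immediate from $|T(u)-T(0)|\leq\sum_k|u_k|$, so the real content is the energy inequality $\mathscr{E}^{\,p}(T(u))^{1/p}\leq\sum_k\mathscr{E}^{\,p}(u_k)^{1/p}$ together with the membership $T(u)\in H^{1,p}(X)$.

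First I would smooth $T$ by convolution: for a standard mollifier $\rho_\eps$ on $\R^N$, set $T_\eps:=T\ast\rho_\eps-(T\ast\rho_\eps)(0)$. Then $T_\eps\in C^{\infty}(\R^N)$, $T_\eps(0)=0$, $T_\eps$ remains a generalized normal contraction (so $|\partial_k T_\eps|\leq 1$ for every $k$), and $T_\eps\to T$ locally uniformly as $\eps\downarrow 0$. For $v=(v_1,\dots,v_N)\in\mathscr{D}^N\subset D(\mathscr{E})^N$, the $N$-variable chain rule---a consequence of item~(7) and the Leibniz rule~(6), obtained by polynomial approximation of $T_\eps$ on compacts in the spirit of \cite{BH}---yields $T_\eps(v)\in D(\mathscr{E})$ with
\[
\Gamma_\mu(T_\eps(v),w)=\sum_{k=1}^N \partial_k T_\eps(\tilde v)\,\Gamma_\mu(v_k,w)\quad\mu\text{-a.e.,}\qquad w\in D(\mathscr{E}).
\]
Specializing to $w=T_\eps(v)$ and invoking the Cauchy--Schwarz inequality (item~(3)) gives the pointwise bound
\[
\Gamma_\mu(T_\eps(v))^{1/2}\leq \sum_{k=1}^N |\partial_k T_\eps(\tilde v)|\,\Gamma_\mu(v_k)^{1/2}\leq \sum_{k=1}^N \Gamma_\mu(v_k)^{1/2}\quad\mu\text{-a.e.,}
\]
so Minkowski's inequality in $L^p(X;\mu)$ delivers $\mathscr{E}^{\,p}(T_\eps(v))^{1/p}\leq \sum_k\mathscr{E}^{\,p}(v_k)^{1/p}$.

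With the pointwise estimate anchored at the smooth level, I pass $\eps\downarrow 0$ for fixed $v\in\mathscr{D}^N$: dominated convergence (via $|T_\eps(v)|\leq\sum_k|v_k|+|T_\eps(0)-0|\to\sum_k|v_k|$) gives $T_\eps(v)\to T(v)$ in $L^p(X;\m)$, while the energy bound shows that $\{T_\eps(v)\}$ is uniformly bounded in the $H^{1,p}$-seminorm. Reflexivity of $H^{1,p}(X)$ extracts a weak subsequential limit which, by uniqueness of $L^p(\m)$-limits, must coincide with $T(v)$; hence $T(v)\in H^{1,p}(X)$, and weak lower semicontinuity of the convex continuous seminorm $\mathscr{E}^{\,p}(\cdot)^{1/p}$ delivers $\mathscr{E}^{\,p}(T(v))^{1/p}\leq\sum_k\mathscr{E}^{\,p}(v_k)^{1/p}$. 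For general $u\in H^{1,p}(X)^N$, choose $v^{(n)}\in\mathscr{D}^N$ with $v^{(n)}\to u$ componentwise in $H^{1,p}$-norm. The previous step supplies $T(v^{(n)})\in H^{1,p}(X)$ with $\mathscr{E}^{\,p}(T(v^{(n)}))^{1/p}\leq\sum_k\mathscr{E}^{\,p}(v_k^{(n)})^{1/p}$, so $\{T(v^{(n)})\}$ is bounded in $H^{1,p}(X)$, and the normal-contraction estimate $|T(v^{(n)})-T(u)|\leq\sum_k|v_k^{(n)}-u_k|$ yields $L^p(\m)$-convergence to $T(u)$. Reflexivity and weak lower semicontinuity close the argument, and the claimed bound $\|T(u)\|_{H^{1,p}}\leq 2^{1-1/p}\sum_k\|u_k\|_{H^{1,p}}$ follows from $a+b\leq 2^{1-1/p}(a^p+b^p)^{1/p}$ applied to $a=\|T(u)\|_{L^p(\m)}$, $b=\mathscr{E}^{\,p}(T(u))^{1/p}$.

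The principal obstacle is the $N$-variable chain rule, since item~(7) is stated only for $\phi\in C^1(\R)$: in the Bouleau--Hirsch spirit one extends via polynomial approximation of $T_\eps$ on large bounded regions combined with the truncation property~(4) to control tails, and here care is required to argue at the $L^p(X;\mu)$-level rather than the classical $L^1$ one. A subsidiary technicality is to confirm that $T_\eps(v)\in\mathscr{D}_{1,p}$ actually lies in $H^{1,p}(X)=\overline{\mathscr{D}}^{\,\|\cdot\|_{H^{1,p}}}$; this is automatic when $\mathscr{D}=\mathscr{D}_{1,p}$, and otherwise one approximates $T_\eps(v)$ by elements of $\mathscr{D}$ in the $H^{1,p}$-norm.
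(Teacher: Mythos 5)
Your proposal is correct and follows essentially the same route as the paper: the multivariate chain rule plus the pointwise Cauchy--Schwarz bound and Minkowski's inequality in $L^p(X;\mu)$ for smooth $T$, mollification to reach general normal contractions, and reflexivity together with weak lower semicontinuity of the norm to pass to limits (the paper cites the chain rule directly from Bouleau--Hirsch/Kuwae rather than re-deriving it, and performs the mollification after, rather than before, the extension to general $u\in H^{1,p}(X)^N$, but these are cosmetic differences). One small slip at the very end: you need $(a^p+b^p)^{1/p}\le a+b$ to bound $\|T(u)\|_{H^{1,p}}$ by $\|T(u)\|_{L^p(X;\m)}+\mathscr{E}^{\,p}(T(u))^{1/p}$, and the inequality $a+b\le 2^{1-\frac{1}{p}}(a^p+b^p)^{1/p}$ should then be applied to $a=\|u_k\|_{L^p(X;\m)}$, $b=\mathscr{E}^{\,p}(u_k)^{1/p}$ for each $k$, not to $T(u)$.
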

By Theorem~\ref{thm:Noramalcontrtaction1}, we can call $(\mathscr{E}^{\,p},D(\mathscr{E}^{\,p}))$ a \emph{non-linear Dirichlet space} on $L^p(X;\m)$. 

\begin{cor}\label{cor:Noramalcontrtaction}
Suppose $p\in]1,+\infty[$. Then we have the following:
\begin{enumerate}
\item[\rm(1)] 
For any $u\in H^{1,p}(X)$, we have $u^{\sharp},|u|, u^+,u^-\in H^{1,p}(X)$, and 
$\mathscr{E}^{\,p}(u^{\sharp})\leq \mathscr{E}^{\,p}(u)$, $\mathscr{E}^{\,p}(|u|)\leq \mathscr{E}^{\,p}(u)$, 
$\mathscr{E}^{\,p}(u^+)\leq \mathscr{E}^{\,p}(u)$ and $\mathscr{E}^{\,p}(u^-)\leq \mathscr{E}^{\,p}(u)$. 
\item[\rm(2)] For any $u,v\in H^{1,p}(X)$, $u\lor v, u\land v\in H^{1,p}(X)$ and $\mathscr{E}^{\,p}(u\lor v)^{\frac{1}{p}}\leq \mathscr{E}^{\,p}(u)^{\frac{1}{p}}+\mathscr{E}^{\,p}(v)^{\frac{1}{p}}$, $\mathscr{E}^{\,p}(u\land v)^{\frac{1}{p}}\leq \mathscr{E}^{\,p}(u)^{\frac{1}{p}}+\mathscr{E}^{\,p}(v)^{\frac{1}{p}}$. 
\item[\rm(3)] For any $T\in C^1(\R^n)$ with $T(0)=0$, $|\nabla T|\in C_b(\R^N)$ and $u=(u_1,\cdots, u_N)
\in (H^{1,p}(X))^N$, 
we have $T(u)\in H^{1,p}(X)$ with $\mathscr{E}^{\,p}(T(u))^{\frac{1}{p}}\leq \max\limits_{1\leq k\leq N}\left\|\frac{\partial T}{\partial x_k}\right\|_{\infty}\sum_{k=1}^N\mathscr{E}^{\,p}(u_k)^{\frac{1}{p}}$. 
\end{enumerate}
\end{cor}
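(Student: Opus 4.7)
The plan is to derive all three parts directly from Theorem~\ref{thm:Noramalcontrtaction1} by exhibiting each map as a (generalized) normal contraction, possibly after a preliminary rescaling. No additional property of $\Gamma_\mu$ or $\mathscr{E}^{\,p}$ beyond what is used to prove Theorem~\ref{thm:Noramalcontrtaction1} should be needed, and the three parts proceed in increasing order of (minimal) difficulty.

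For part (1), the paper already points out explicitly in the paragraph preceding Theorem~\ref{thm:Noramalcontrtaction1} that each of $t\mapsto t^\sharp=0\lor t\land1$, $t\mapsto|t|$, $t\mapsto t^+$ and $t\mapsto t^-$ is a normal contraction on $\R$. Thus each is covered by Theorem~\ref{thm:Noramalcontrtaction1} with $N=1$, and the claimed inclusions together with the pointwise energy inequalities follow at once (with the same constant $1$ on the right-hand side).

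For part (2), I would introduce $T_\lor,T_\land:\R^2\to\R$ by $T_\lor(x,y):=x\lor y$ and $T_\land(x,y):=x\land y$. Clearly $T_\lor(0,0)=T_\land(0,0)=0$. The Lipschitz bound
\[
|T_\lor(x,y)-T_\lor(x',y')|\le|x-x'|+|y-y'|
\]
comes from a two-line case analysis: assuming without loss of generality that $x\lor y=x$, one has $x\le x'+|x-x'|\le(x'\lor y')+|x-x'|+|y-y'|$, and the reverse inequality is symmetric; the case $T_\land$ is identical, or is reduced to $T_\lor$ via $u\land v=-((-u)\lor(-v))$ together with part (1). Hence $T_\lor$ and $T_\land$ are generalized normal contractions in the sense of Theorem~\ref{thm:Noramalcontrtaction1} with $N=2$, which yields part (2).

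For part (3), set $M:=\max_{1\le k\le N}\|\partial T/\partial x_k\|_\infty$, finite by hypothesis. If $M=0$ then $T\equiv T(0)=0$ and the claim is trivial. Otherwise, integrating $\nabla T$ along the segment from $y$ to $x$ gives
\[
|T(x)-T(y)|\le\sum_{k=1}^N\left\|\frac{\partial T}{\partial x_k}\right\|_\infty|x_k-y_k|\le M\sum_{k=1}^N|x_k-y_k|,
\]
so $\widetilde T:=T/M$ is a generalized normal contraction with $\widetilde T(0)=0$. Applying Theorem~\ref{thm:Noramalcontrtaction1} to $\widetilde T(u)$ yields $\widetilde T(u)\in H^{1,p}(X)$ with $\mathscr{E}^{\,p}(\widetilde T(u))^{1/p}\le\sum_{k=1}^N\mathscr{E}^{\,p}(u_k)^{1/p}$, after which I would use the $p$-homogeneity $\mathscr{E}^{\,p}(\lambda v)=|\lambda|^p\mathscr{E}^{\,p}(v)$, immediate from $\Gamma_\mu(\lambda v)=\lambda^2\Gamma_\mu(v)$, together with $T(u)=M\widetilde T(u)$, to conclude. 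There is no real obstacle: the content of the statement lies in Theorem~\ref{thm:Noramalcontrtaction1}, and the only work here is the elementary Lipschitz verification in (2) and the rescaling trick in (3).
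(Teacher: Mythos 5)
Your proposal is correct and follows exactly the route the paper intends: the paper's own proof of this corollary is a one-line remark that it is an easy consequence of Theorem~\ref{thm:Noramalcontrtaction1}, with all details omitted, and your verification that each map is a (generalized) normal contraction (after rescaling by $M=\max_k\|\partial T/\partial x_k\|_\infty$ in part (3), using the $p$-homogeneity of $\mathscr{E}^{\,p}$) supplies precisely those omitted details correctly.
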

\begin{thm}\label{thm:Noramalcontrtaction2}
Suppose $p\in]1,+\infty[$ and $u,v\in H^{1,p}(X)$. Then we have the following strong subadditivity of $p$-energy $\mathscr{E}^{\,p}$:
\begin{align}
\mathscr{E}^{\,p}(u\lor v)+\mathscr{E}^{\,p}(u\land v)\leq \mathscr{E}^{\,p}(u)+\mathscr{E}^{\,p}(v).\label{eq:StrongSubAdditive}
\end{align}
\end{thm}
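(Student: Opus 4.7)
The plan is to reduce the strong subadditivity to the dense class $\mathscr{D}$, where the truncation property~(4) applies and actually produces equality, and then to pass to the limit using the reflexivity of $H^{1,p}(X)$ furnished by Theorem~\ref{thm:Clarkson}.

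Concretely, I would fix approximating sequences $\{u_n\},\{v_n\}\subset\mathscr{D}$ with $u_n\to u$ and $v_n\to v$ in $H^{1,p}(X)$. Since $\mathscr{D}\subset D(\mathscr{E})$, property~(4) yields the $\mu$-a.e.\ identities
\begin{align*}
\Gamma_{\mu}(u_n\lor v_n)&=\1_{\{\tilde{u}_n\geq\tilde{v}_n\}}\Gamma_{\mu}(u_n)+\1_{\{\tilde{u}_n<\tilde{v}_n\}}\Gamma_{\mu}(v_n),\\
\Gamma_{\mu}(u_n\land v_n)&=\1_{\{\tilde{u}_n\geq\tilde{v}_n\}}\Gamma_{\mu}(v_n)+\1_{\{\tilde{u}_n<\tilde{v}_n\}}\Gamma_{\mu}(u_n).
\end{align*}
Raising to the $p/2$-th power (which distributes over these disjoint indicators, since $\mu$ does not charge $\mathscr{E}$-exceptional sets so the two indicators sum to $1$ $\mu$-a.e.) and integrating against $\mu$ produces the \emph{equality}
\begin{align*}
\mathscr{E}^{\,p}(u_n\lor v_n)+\mathscr{E}^{\,p}(u_n\land v_n)=\mathscr{E}^{\,p}(u_n)+\mathscr{E}^{\,p}(v_n),
\end{align*}
which in particular bounds the left-hand side uniformly in $n$.

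Next, I would pass to the limit. The pointwise estimates $|u_n\lor v_n-u\lor v|\leq|u_n-u|+|v_n-v|$ and $|u_n\land v_n-u\land v|\leq|u_n-u|+|v_n-v|$ give $u_n\lor v_n\to u\lor v$ and $u_n\land v_n\to u\land v$ in $L^p(X;\m)$, and Corollary~\ref{cor:Noramalcontrtaction}(2) places both limits in $H^{1,p}(X)$. Combined with the equality above, $\{u_n\lor v_n\}$ and $\{u_n\land v_n\}$ are norm-bounded in $H^{1,p}(X)$. By the reflexivity of $H^{1,p}(X)$ (Theorem~\ref{thm:Clarkson}) I can extract a common subsequence $\{n_k\}$ along which both sequences converge weakly in $H^{1,p}(X)$; since weak $H^{1,p}$-convergence implies weak $L^p(X;\m)$-convergence and the strong $L^p(X;\m)$-limits are $u\lor v$ and $u\land v$, the two weak $H^{1,p}$-limits must coincide with these. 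The functional $u\mapsto\mathscr{E}^{\,p}(u)^{1/p}=\|\Gamma_{\mu}(u)^{1/2}\|_{L^p(X;\mu)}$ is a seminorm on $H^{1,p}(X)$ dominated by $\|\cdot\|_{H^{1,p}}$, hence convex and continuous, and therefore weakly lower semicontinuous on the reflexive space; the same holds for its $p$-th power $\mathscr{E}^{\,p}$. Applying this along the chosen subsequence and using the equality above yields
\begin{align*}
\mathscr{E}^{\,p}(u\lor v)+\mathscr{E}^{\,p}(u\land v)\leq\liminf_{k\to\infty}\bigl[\mathscr{E}^{\,p}(u_{n_k}\lor v_{n_k})+\mathscr{E}^{\,p}(u_{n_k}\land v_{n_k})\bigr]=\mathscr{E}^{\,p}(u)+\mathscr{E}^{\,p}(v).
\end{align*}

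The main obstacle is the identification of the weak $H^{1,p}$-limits of $u_n\lor v_n$ and $u_n\land v_n$, since the lattice operations are not a priori continuous in the $H^{1,p}$-topology and since elements of $H^{1,p}(X)$ need not lie in $D(\mathscr{E})$, so the truncation property~(4) cannot be invoked directly on $u$ and $v$. The Clarkson inequality of Theorem~\ref{thm:Clarkson} and the reflexivity it provides are the essential ingredient that resolves this: they permit extracting weakly convergent subsequences in $H^{1,p}(X)$ whose limits are pinned down through the continuous embedding into $L^p(X;\m)$, where convergence follows from the elementary pointwise estimate.
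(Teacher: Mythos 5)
Your proposal is correct and follows essentially the same route as the paper: exact equality on the core via the truncation property, $H^{1,p}$-boundedness from Corollary~\ref{cor:Noramalcontrtaction}(2), extraction of a common weakly convergent subsequence by reflexivity, identification of the weak limits through strong $L^p(X;\m)$-convergence of the lattice operations, and weak lower semicontinuity of $\mathscr{E}^{\,p}$. Your write-up is in fact slightly more explicit than the paper's on two points it leaves implicit, namely the identification of the weak limits and the convexity/continuity argument behind the lower semicontinuity of the $p$-energy.
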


\begin{defn}\label{def:capacity}
{\rm Denote by $\mathcal{O}(X)$ the family of all open subsets of $X$. For $A\in\mathcal{O}(X)$ we define
\begin{align}
\mathcal{L}_A:&=\{u\in D(\mathscr{E}^{\,p})\mid u\geq1 \;\m\text{-a.e.~on }A\}, \label{eq:LA}\\
{\rm Cap}_{1,p}(A):&=\left\{\begin{array}{cc}\inf_{u\in\mathcal{L}_A}\mathscr{E}^{\,p}_1(u,u), & \mathcal{L}_A\ne\emptyset  \\+\infty & 
\mathcal{L}_A=\emptyset,
\end{array}\right.\label{eq:CapacityOpen}
\end{align}
and for any set $A\subset X$ we let 
\begin{align}
{\rm Cap}_{1,p}(A):=\inf_{B\in\mathcal{O}(X),A\subset B}{\rm Cap}_{1,p}(B). \label{eq:CapacityGeneral}
\end{align}
We call this \emph{$1$-order $(1,p)$-capacity} of $A$ or simply the \emph{capacity} of $A$. 
}
\end{defn}

Let $\mathcal{O}_0(X):=\{A\in \mathcal{O}(X)\mid \mathcal{L}_A\ne\emptyset\}$.  
\begin{thm}\label{thm:potential}
For $A,B\in\mathcal{O}_0(X)$, we have the following: 
\begin{enumerate}
\item[{\rm (1)}] There exists a unique element $e_A\in \mathcal{L}_A$ such that 
\begin{align}
\mathscr{E}^{\,p}_1(e_A,e_A)={\rm Cap}_{1,p}(A).\label{eq:potential}
\end{align} 
\item[{\rm (2)}] $0\leq e_A\leq 1$ $\m$-a.e. and $e_A=1$ $\m$-a.e.~on $A$. 
\item[{\rm (3)}] $e_A$ is a unique element of $D(\mathscr{E}^{\,p})$ satisfying $e_A=1$ $\m$-a.e.~on $A$ and 
$\mathscr{E}^{\,p}_1(e_A,v)\geq0$ for all $v\in  D(\mathscr{E}^{\,p})$ with $v\geq0$ $\m$-a.e.~on $A$. 
\item[{\rm (4)}] If $v\in D(\mathscr{E}^{\,p})$ satisfies $v=1$ $\m$-a.e.~on $A$, then $\mathscr{E}^{\,p}_1(e_A,v)={\rm Cap}_{1,p}(A)$. 
\item[{\rm (5)}] If $A\subset B$, then $e_A\leq e_B$ $\m$-a.e.
\end{enumerate}
\end{thm}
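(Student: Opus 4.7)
The plan is to derive all five items from three tools already established in the paper: the uniform convexity (hence reflexivity and strict convexity) of $H^{1,p}(X)$ from Theorem~\ref{thm:Clarkson}, the normal-contraction property from Corollary~\ref{cor:Noramalcontrtaction}, and the strong subadditivity from Theorem~\ref{thm:Noramalcontrtaction2}. For item~(1), I first observe that $\mathcal{L}_A$ is non-empty (by assumption $A \in \mathcal{O}_0(X)$), convex (immediate from the pointwise inequality $\geq 1$ on $A$), and $\|\cdot\|_{H^{1,p}}$-closed, since $H^{1,p}$-convergence forces $L^p(X;\m)$-convergence and hence $\m$-a.e.\ convergence along a subsequence, which preserves $\geq 1$ on $A$. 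Because $\mathscr{E}^{\,p}_1(u) = \|u\|_{H^{1,p}}^p$, the problem \eqref{eq:potential} is the projection of $0$ onto a closed convex set in a uniformly convex Banach space, and the unique minimizer $e_A$ is produced by the standard argument: weak compactness of bounded sets via reflexivity, weak lower semicontinuity of the norm, and strict convexity for uniqueness. For item~(2), I apply the unit truncation $t \mapsto 0 \vee t \wedge 1$ to $e_A$: by Corollary~\ref{cor:Noramalcontrtaction}(1), $e_A^\sharp \in H^{1,p}(X)$ with $\mathscr{E}^{\,p}(e_A^\sharp) \leq \mathscr{E}^{\,p}(e_A)$, and $|e_A^\sharp| \leq |e_A|$ $\m$-a.e.\ gives $\mathscr{E}^{\,p}_1(e_A^\sharp) \leq \mathscr{E}^{\,p}_1(e_A)$; since $e_A \geq 1$ on $A$ forces $e_A^\sharp = 1$ there, $e_A^\sharp \in \mathcal{L}_A$, and uniqueness in (1) yields $e_A = e_A^\sharp$, whence $0 \leq e_A \leq 1$ $\m$-a.e.\ and $e_A = 1$ $\m$-a.e.\ on $A$.

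For item~(3), the idea is that $t \mapsto \mathscr{E}^{\,p}_1(e_A + tv)$ is convex with right derivative at $t=0$ equal to $p\,\mathscr{E}^{\,p}_1(e_A, v)$; whenever $v \geq 0$ $\m$-a.e.\ on $A$, the perturbation $e_A + tv$ stays in $\mathcal{L}_A$ for $t \geq 0$, so minimality of $e_A$ forces this derivative to be $\geq 0$. Conversely, suppose $f \in D(\mathscr{E}^{\,p})$ satisfies $f = 1$ $\m$-a.e.\ on $A$ and the variational inequality $\mathscr{E}^{\,p}_1(f, w) \geq 0$ for every $w \in D(\mathscr{E}^{\,p})$ with $w \geq 0$ $\m$-a.e.\ on $A$; then for any $u \in \mathcal{L}_A$ the function $u - f \geq 0$ on $A$ is admissible, and by convexity of $\mathscr{E}^{\,p}_1$ one obtains $\mathscr{E}^{\,p}_1(u) \geq \mathscr{E}^{\,p}_1(f) + p\,\mathscr{E}^{\,p}_1(f, u-f) \geq \mathscr{E}^{\,p}_1(f)$, identifying $f$ as the unique minimizer $e_A$ from (1).

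Items~(4) and~(5) then follow easily. For~(4), if $v = 1$ $\m$-a.e.\ on $A$, then $\pm(v - e_A) \geq 0$ $\m$-a.e.\ on $A$, so (3) forces $\mathscr{E}^{\,p}_1(e_A, v - e_A) = 0$, giving $\mathscr{E}^{\,p}_1(e_A, v) = \mathscr{E}^{\,p}_1(e_A, e_A) = {\rm Cap}_{1,p}(A)$. For~(5), with $A \subset B$, set $w := e_A \wedge e_B$ and $W := e_A \vee e_B$; then $w \in \mathcal{L}_A$ and $W \in \mathcal{L}_B$, and combining the strong subadditivity of Theorem~\ref{thm:Noramalcontrtaction2} with the pointwise identity $|w|^p + |W|^p = |e_A|^p + |e_B|^p$ yields $\mathscr{E}^{\,p}_1(w) + \mathscr{E}^{\,p}_1(W) \leq \mathscr{E}^{\,p}_1(e_A) + \mathscr{E}^{\,p}_1(e_B) = {\rm Cap}_{1,p}(A) + {\rm Cap}_{1,p}(B)$, while $\mathscr{E}^{\,p}_1(w) \geq {\rm Cap}_{1,p}(A)$ and $\mathscr{E}^{\,p}_1(W) \geq {\rm Cap}_{1,p}(B)$ force equality in both, so (1) gives $w = e_A$ and $W = e_B$, whence $e_A \leq e_B$ $\m$-a.e. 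The main technical obstacle will be the differentiation step underlying (3): for $p \in (1,2)$, both $s \mapsto s^{p/2}$ applied to $\Gamma_\mu(e_A + tv)$ and $s \mapsto |s|^{p-2}s$ applied to $e_A + tv$ are non-smooth at $0$, so I would justify the right-derivative formula by exploiting convexity (which guarantees existence of one-sided derivatives) and passing to the limit via monotone or dominated convergence on the complement of the degenerate sets $\{\Gamma_\mu(e_A) = 0\}$ and $\{e_A = 0\}$.
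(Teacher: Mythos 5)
Your proposal is correct, and for items (1)--(4) it runs essentially parallel to the paper's proof, with two minor differences of technique: for existence in (1) you invoke reflexivity, weak compactness and weak lower semicontinuity of the norm on the closed convex set $\mathcal{L}_A$, whereas the paper extracts an $H^{1,p}$-Cauchy minimizing sequence directly from the Clarkson inequalities \eqref{eq:Clarkson1}--\eqref{eq:Clarkson2}; and for the uniqueness half of (3) you use the convexity (subgradient) inequality $\mathscr{E}^{\,p}_1(u)\geq\mathscr{E}^{\,p}_1(f)+p\,\mathscr{E}^{\,p}_1(f,u-f)$, whereas the paper combines $\mathscr{E}^{\,p}_1(u,w-u)\geq0$ with the H\"older-type bound \eqref{eq:ModifiedHolder} to get $\mathscr{E}^{\,p}_1(u,u)\leq\mathscr{E}^{\,p}_1(w,w)$, which has the advantage of not requiring the differentiation formula a second time. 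You are also right to single out the justification of $\frac{\d}{\d\eps}\mathscr{E}^{\,p}_1(e_A+\eps v)\big|_{\eps=0^+}=p\,\mathscr{E}^{\,p}_1(e_A,v)$ for $p\in]1,2[$ as the delicate step; the paper performs this computation formally, so your convexity/dominated-convergence treatment of the sets $\{\Gamma_{\mu}(e_A)=0\}$ and $\{e_A=0\}$ is a genuine refinement rather than a detour. The substantive divergence is item (5): the paper notes that $e_A\wedge e_B-e_A$ vanishes $\m$-a.e.\ on $A$, deduces $\mathscr{E}^{\,p}_1(e_A,e_A)=\mathscr{E}^{\,p}_1(e_A,e_A\wedge e_B)$ from (3), and then applies \eqref{eq:ModifiedHolder} to conclude $\mathscr{E}^{\,p}_1(e_A,e_A)\leq\mathscr{E}^{\,p}_1(e_A\wedge e_B,e_A\wedge e_B)$ --- an inequality that already follows from $e_A\wedge e_B\in\mathcal{L}_A$ and which by itself does not identify $e_A\wedge e_B$ with $e_A$. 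Your route --- strong subadditivity of $\mathscr{E}^{\,p}$ from Theorem~\ref{thm:Noramalcontrtaction2} together with the pointwise identity $|e_A\vee e_B|^p+|e_A\wedge e_B|^p=|e_A|^p+|e_B|^p$, the two capacity lower bounds $\mathscr{E}^{\,p}_1(e_A\wedge e_B)\geq{\rm Cap}_{1,p}(A)$ and $\mathscr{E}^{\,p}_1(e_A\vee e_B)\geq{\rm Cap}_{1,p}(B)$, and the uniqueness from (1) --- supplies the missing reverse inequality and is the same mechanism the paper itself deploys for the strong subadditivity of the capacity in Lemma~\ref{lem:Monotone}(2); it is the more watertight argument here.
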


\begin{remark}\label{rem:BiroliVernole2}
{\rm 
\begin{enumerate}
\item The assertion of Theorem~\ref{thm:potential}(1),(5) is proved in \cite[Proposition~2.1]{BV2005} 
based on the existence of lower semi continuous convex functional functional $\Phi$ on $L^p(X;\m)$
in the framework of locally compact separable metric space $X$ having positive Radon measure $\m$ with ${\rm supp}[\m]=X$ under several assumptions related to the existence of uniformly convex Banach space in terms of $\Phi$ and the conclusion of Theorem~\ref{thm:Noramalcontrtaction2}. 
In \cite[Proposition~2.1]{BV2005}, they did not show Theorem~\ref{thm:potential}(2) under the condition $(H_6)$ in \cite{BV2005}.
The contents of Theorem~\ref{thm:potential}(3),(4) are not shown in 
\cite[Proposition~2.2]{BV2005}. Theorem~\ref{thm:potential}(1),(5) is not completely covered by 
\cite[Proposition~2.1]{BV2005}, because our underlying space $X$ is not necessarily locally compact.  
\item The assertion of Theorem~\ref{thm:potential}(1) is also proved in \cite[Theorem~2.1]{FukUe2004} in the framework of 
$(1,p)$-Sobolev space of pure jump type by proving Hanner type inequality for Sobolev spaces. 
\item Theorem~\ref{thm:potential}(1) is also proved by 
\cite[Lemma~2.6 and Proposition~2.16]{Hino:ASPM2004} in more general framework, but 
the latter assertion of Theorem~\ref{thm:potential}(2) and  Theorem~\ref{thm:potential}(3),(4),(5) are not shown in \cite{Hino:ASPM2004}. 
\end{enumerate}
}
\end{remark}

\begin{cor}\label{cor:Capacity}
The $(1,p)$-capacity ${\rm Cap}_{1,p}$ is a Choquet capacity, i.e., 
\begin{enumerate}
\item[{\rm (1)}] If $A\subset B$, then ${\rm Cap}_{1,p}(A)\leq{\rm Cap}_{1,p}(B)$. 
\item[{\rm (2)}] If $\{A_n\}_{n\in\mathbb{N}}$ is an increasing sequence of subsets of $X$, then 
\begin{align}
{\rm Cap}_{1,p}\left(\bigcup_{n=1}^{\infty}A_n\right)\leq\sup_{n\in\mathbb{N}}{\rm Cap}_{1,p}(A_n).\label{eq:increasing}
\end{align} 
\item[{\rm (3)}] If $\{A_n\}_{n\in\mathbb{N}}$ is a decreasing sequence of compact subsets of $X$, then 
\begin{align}
{\rm Cap}_{1,p}\left(\bigcap_{n=1}^{\infty}A_n\right)\leq\inf_{n\in\mathbb{N}}{\rm Cap}_{1,p}(A_n).\label{eq:decreasing}
\end{align}
\end{enumerate}
Moreover, it holds that for any Borel set $A$
\begin{align}
{\rm Cap}_{1,p}(A)=\sup_{K:\text{compact}, K\subset A}{\rm Cap}_{1,p}(K),\label{eq:capacitable}
\end{align}
and for any $A,B\subset X$
\begin{align}
{\rm Cap}_{1,p}(A\cup B)+{\rm Cap}_{1,p}(A\cap B)\leq {\rm Cap}_{1,p}(A)+{\rm Cap}_{1,p}(B). \label{eq:strongSubadditiveCap}
\end{align}
\end{cor}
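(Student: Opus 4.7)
The plan is to establish the claims in the order $(1)$, \eqref{eq:strongSubadditiveCap}, $(2)$, $(3)$, \eqref{eq:capacitable}. Monotonicity $(1)$ is immediate: $A\subset B$ with $A,B$ open gives $\mathcal{L}_B\subset\mathcal{L}_A$ and hence the desired inequality on capacities, while the general case follows from \eqref{eq:CapacityGeneral}. Claim $(3)$ is then a trivial consequence of $(1)$. The substantive work lies in \eqref{eq:strongSubadditiveCap}, in the non-trivial direction of $(2)$, and in \eqref{eq:capacitable}.

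For \eqref{eq:strongSubadditiveCap} I first treat $A,B\in\mathcal{O}_0(X)$. Letting $e_A,e_B$ be the equilibrium potentials from Theorem~\ref{thm:potential} (so $0\leq e_A,e_B\leq 1$ $\m$-a.e., $e_A=1$ $\m$-a.e.~on $A$ and $e_B=1$ $\m$-a.e.~on $B$), the functions $e_A\vee e_B$ and $e_A\wedge e_B$ lie in $\mathcal{L}_{A\cup B}$ and $\mathcal{L}_{A\cap B}$ respectively, both belonging to $H^{1,p}(X)$ by Corollary~\ref{cor:Noramalcontrtaction}(2). Combining the pointwise identity $(a\vee b)^p+(a\wedge b)^p=a^p+b^p$ for $a,b\geq 0$ with the strong subadditivity of $\mathscr{E}^{\,p}$ given by Theorem~\ref{thm:Noramalcontrtaction2} yields
\[
\mathscr{E}^{\,p}_1(e_A\vee e_B)+\mathscr{E}^{\,p}_1(e_A\wedge e_B)\leq\mathscr{E}^{\,p}_1(e_A)+\mathscr{E}^{\,p}_1(e_B),
\]
which proves \eqref{eq:strongSubadditiveCap} on $\mathcal{O}_0(X)\times\mathcal{O}_0(X)$. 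Arbitrary $A,B\subset X$ are then handled by choosing open supersets $U\supset A$, $V\supset B$, observing $U\cup V\supset A\cup B$ and $U\cap V\supset A\cap B$, applying the open case to $U,V$, and passing to the infimum.

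The main obstacle is the non-trivial direction of $(2)$ for open sets. For increasing open $A_n$ with $A=\bigcup_n A_n$ and $M:=\sup_n\mathrm{Cap}_{1,p}(A_n)<\infty$, the potentials $e_n:=e_{A_n}$ from Theorem~\ref{thm:potential} form a bounded sequence in $H^{1,p}(X)$, so by the reflexivity asserted in Theorem~\ref{thm:Clarkson} a subsequence converges weakly to some $e\in H^{1,p}(X)$. Mazur's lemma yields finite convex combinations $\tilde e_n=\sum_{k\geq n}\lambda_{n,k}e_k$ converging strongly to $e$ in $H^{1,p}(X)$ (and, along a further subsequence, $\m$-a.e.). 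Since $e_k=1$ $\m$-a.e.~on $A_m$ whenever $k\geq m$, each $\tilde e_n$ with $n\geq m$ also equals $1$ $\m$-a.e.~on $A_m$, and the $\m$-a.e.~limit $e$ therefore equals $1$ $\m$-a.e.~on every $A_m$ and hence on $A$, giving $e\in\mathcal{L}_A$. The triangle inequality in $H^{1,p}(X)$ gives $\|\tilde e_n\|_{H^{1,p}}^p\leq M$, so $\mathscr{E}^{\,p}_1(e)\leq M$ and thus $\mathrm{Cap}_{1,p}(A)\leq M$.

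For $(2)$ on arbitrary increasing sets I would invoke \eqref{eq:strongSubadditiveCap} via the standard reduction: given $\varepsilon>0$, pick open $B_n\supset A_n$ with $\mathrm{Cap}_{1,p}(B_n)\leq\mathrm{Cap}_{1,p}(A_n)+\varepsilon 2^{-n}$, set $B'_n:=\bigcup_{k\leq n}B_k$, and prove by induction on $n$, using \eqref{eq:strongSubadditiveCap} together with $B'_{n-1}\cap B_n\supset A_{n-1}$, that $\mathrm{Cap}_{1,p}(B'_n)\leq\mathrm{Cap}_{1,p}(A_n)+\varepsilon$; the open case applied to $\{B'_n\}$ and $\varepsilon\downarrow 0$ then yields $(2)$. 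Finally, \eqref{eq:capacitable} follows from the classical Choquet capacitability theorem applied to Borel (hence analytic) subsets of the Lusin space $X$: monotonicity and continuity from below are provided by $(1)$ and $(2)$, while the reverse inequality $\mathrm{Cap}_{1,p}(\bigcap_n K_n)\geq\inf_n\mathrm{Cap}_{1,p}(K_n)$ for decreasing compacts $K_n$, complementing $(3)$, is obtained by choosing an open $G\supset\bigcap_n K_n$ nearly achieving the capacity of $\bigcap_n K_n$ and using Hausdorff compactness to conclude $K_n\subset G$ for all large $n$.
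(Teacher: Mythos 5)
Your proof is correct, and its skeleton (reduce everything to open sets via equilibrium potentials and strong subadditivity of $\mathscr{E}^{\,p}$, then extend to arbitrary sets) is the same as the paper's; the difference is in how the two "abstract" halves are executed. The paper proves only the open-set statements (monotonicity, \eqref{eq:strongSubadditiveCap}, and \eqref{eq:increasing} for open sets) in Lemma~\ref{lem:Monotone} and then outsources the passage to arbitrary sets, the decreasing-compacts property, and \eqref{eq:capacitable} to the general Choquet-capacity machinery of \cite[Theorems A.1.1 and A.1.2]{FOT}; you instead carry out those standard reductions explicitly (the $\varepsilon 2^{-n}$ covering argument with $B_n'=\bigcup_{k\le n}B_k$, and the compactness argument giving continuity from above on compacts), which is exactly what those cited theorems do internally. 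The one genuinely different step is the increasing-open-sets case: the paper applies Clarkson's inequality \eqref{eq:Clarkson1}/\eqref{eq:Clarkson2} to show that $\{e_{A_n}\}$ is $H^{1,p}$-Cauchy, so it converges strongly to a limit that is then identified with $e_{A}$ via the variational characterization in Theorem~\ref{thm:potential}(3), yielding $\mathrm{Cap}_{1,p}(A)=\sup_n\mathrm{Cap}_{1,p}(A_n)$ together with strong convergence of the potentials. You use only reflexivity plus Mazur's lemma to manufacture some competitor $e\in\mathcal{L}_{A}$ with $\mathscr{E}^{\,p}_1(e)\le\sup_n\mathrm{Cap}_{1,p}(A_n)$; this is softer (no uniform convexity needed at this point, no identification of the limit with $e_A$) and suffices for the stated inequality, at the cost of losing the extra information about convergence of the equilibrium potentials that the paper's argument provides. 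Both treatments of \eqref{eq:strongSubadditiveCap} coincide, including the implicit use of the pointwise identity $(a\vee b)^p+(a\wedge b)^p=a^p+b^p$ for the zero-order terms, which the paper leaves unstated.
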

\begin{remark}\label{rem:BiroliVernole1}
{\rm 
\begin{enumerate}
\item The assertion of Corollary~\ref{cor:Capacity} is proved by \cite[Proposition~2.2]{BV2005} 
in their framework noted in Remark~\ref{rem:BiroliVernole2}.
Theorem~\ref{cor:Capacity} is not completely covered by 
\cite[Proposition~2.2]{BV2005}, because our underlying space $X$ is not necessarily locally compact.  
\item The assertion of Corollary~\ref{cor:Capacity} is also proved in \cite[Corollary~2.2]{FukUe2004} in the framework of 
$(1,p)$-Sobolev space of pure jump type. 
\item Except~\eqref{eq:strongSubadditiveCap}, Corollary~\ref{cor:Capacity} is also proved by \cite[Proposition~2.17]{Hino:ASPM2004} in more general context.
\end{enumerate}
}
\end{remark}
\begin{remark}
{\rm For $\alpha>0$, we can define the $\alpha$-order $(1,p)$-capacity ${\rm Cap}_{\alpha,p}(A)$ of $A$ by using $\mathscr{E}^{\,p}_{\alpha}(u,v)$. Similar statements of Theorem~\ref{cor:Capacity}, Theorem~\ref{thm:potential} and Lemma~\ref{lem:Monotone} below 
remain valid for ${\rm Cap}_{\alpha,p}$. Moreover, for $0<\alpha<\beta$, we have 
$\frac{\alpha}{\beta}{\rm Cap}_{\beta,p}(A)\leq{\rm Cap}_{\alpha,p}(A)\leq{\rm Cap}_{\beta,p}(A)$. 
}
\end{remark}

\section{Proofs of Theorems~\ref{thm:Clarkson}, \ref{thm:Noramalcontrtaction1}, Corollary~\ref{cor:Noramalcontrtaction}, Theorem~\ref{thm:Noramalcontrtaction2} and Corollary~\ref{cor:Capacity}}\label{sec:SobolevSpaceH1p}

\begin{proof}[{\bf Proof of Theorem~\ref{thm:Clarkson}}]
We first prove the assertion for $\alpha=0$. 
For $u,v\in D(\mathscr{E})$, we know 
\begin{align}
\Gamma_{\mu}\left(\frac{u+v}{2}\right)+\Gamma_{\mu}\left(\frac{u-v}{2}\right)=\frac{\Gamma_{\mu}(u)+\Gamma_{\mu}(v)}{2}\quad \mu\text{-a.e.~on }X.\label{eq:curre-DuChampIdentity}
\end{align}
Since any $u\in D(\mathscr{E}^{\,p})$ can be approximated by a sequence $\{u_n\}\subset \mathscr{D}_{1,p}$ in $H^{1,p}$-norm, 
we see the identity \eqref{eq:curre-DuChampIdentity} for $u,v\in D(\mathscr{E}^{\,p})$. 
Suppose $p\in[2,+\infty[$. From the inequality $\|x\|_p\leq\|x\|_2$ for $x\in\R^2$ and \eqref{eq:curre-DuChampIdentity}, 
\begin{align}
\Gamma_{\mu}\left(\frac{u+v}{2}\right)^{\frac{p}{2}}+\Gamma_{\mu}\left(\frac{u-v}{2}\right)^{\frac{p}{2}}&\leq  
\left(\Gamma_{\mu}\left(\frac{u+v}{2} \right)+\Gamma_{\mu}\left(\frac{u-v}{2} \right) \right)^{\frac{p}{2}}\notag\\
&\hspace{-0.1cm}\stackrel{\eqref{eq:curre-DuChampIdentity}}{=}\left(\frac{\Gamma_{\mu}(u)+\Gamma_{\mu}(v)}{2} \right)^{\frac{p}{2}}\notag\\
&\leq \frac{\Gamma_{\mu}(u)^{\frac{p}{2}}+\Gamma_{\mu}(v)^{\frac{p}{2}}}{2},\qquad \mu\text{-a.e.}\label{eq:Clarkson*}
\end{align}
where we use the fact that $\R_{+}\ni t\mapsto t^{\frac{p}{2}}$ is convex in the last step. Integrating this we deduce 
\eqref{eq:Clarkson1} under $\alpha=0$. Next we suppose $p\in]1,2]$. 
Obviously $p\leq2\leq q:=p/(p-1)$ and thus for any $u,v\in D(\mathscr{E}^{\,p})$ we have 
\begin{align}
\Gamma_{\mu}\left(\frac{u+v}{2}\right)^{\frac{q}{2}}+\Gamma_{\mu}\left(\frac{u-v}{2}\right)^{\frac{q}{2}}&
\hspace{-0.1cm}\stackrel{\eqref{eq:Clarkson*}}{\leq} 
\frac{\Gamma_{\mu}(u)^{\frac{q}{2}}+\Gamma_{\mu}(v)^{\frac{q}{2}}}{2}\notag\\
&\leq\left(\frac{\Gamma_{\mu}(u)^{\frac{p}{2}}+\Gamma_{\mu}(v)^{\frac{p}{2}}}{2} \right)^{\frac{q}{p}}, \quad \mu\text{-a.e.}\label{eq:ReverseClarkson}
\end{align}
where in the last inequality we used the fact that $\|x\|_2\leq\|x\|_p$ for any $x\in\R^2$. Thanks to the reverse triangle inequality in $L^r(X;\mu)$ for $r\in]0,1[$ proved in \cite[Proposition~4.4]{GigliNobili}, for any non-negative Borel functions $f,g$, it holds
\begin{align}
\left(\int_X f^r\d\mu\right)^{\frac{1}{r}}+\left(\int_X g^r\d\mu\right)^{\frac{1}{r}}\leq
\left(\int_X (f+g)^r\d\mu\right)^{\frac{1}{r}}.\label{eq:ReverseTriangle}
\end{align}
We apply this with $r:=\frac{p}{q}$, $f:=\Gamma_{\mu}\left(\frac{u+v}{2}\right)^{\frac{q}{2}}$ and 
$g:=\Gamma_{\mu}\left(\frac{u-v}{2}\right)^{\frac{q}{2}}$ to obtain 
\begin{align*}
&\left(\left(\int_X \Gamma_{\mu}\left(\frac{u+v}{2}\right)^{\frac{p}{2}}\d\mu\right)^{\frac{q}{p}}
+\left(\int_X \Gamma_{\mu}\left(\frac{u-v}{2}\right)^{\frac{p}{2}}\d\mu\right)^{\frac{q}{p}}
\right)^{\frac{p}{q}}\\
&\hspace{4cm}\leq \int_X\left(\Gamma_{\mu}\left(\frac{u+v}{2} \right)^{\frac{q}{2}}+\Gamma_{\mu}\left(\frac{u-v}{2} \right)^{\frac{q}{2}} \right)^{\frac{p}{q}}\d\mu.
\end{align*}
This and \eqref{eq:ReverseClarkson} give \eqref{eq:Clarkson2} under $\alpha=0$. 

Next we prove the case for $\alpha>0$. In this case, the proof for $p\in[2,+\infty[$ is easy, because it is done by adding 
  \eqref{eq:Clarkson1} under $\alpha=0$ to the usual Clarkson's inequality over $L^p(X;\m)$. Next we assume 
  $p\in]1,2]$. We have two inequalities of Clarkson type below: 
\begin{align}
\mathscr{E}^{\,p\!}\left(\frac{f+g}{2} \right)^{\frac{1}{p-1}}+\mathscr{E}^{\,p\!}\left(\frac{f-g}{2} \right)^{\frac{1}{p-1}}&\leq\left(\frac{\mathscr{E}^{\,p}(f)+\mathscr{E}^{\,p}(g)}{2} \right)^{\frac{1}{p-1}},\label{eq:Clarkson3}\\
\left\|\alpha\cdot\frac{f+g}{2}\right\|_{L^p(X;\m)}^{\frac{p}{p-1}}+\left\|\alpha\cdot\frac{f-g}{2}\right\|_{L^p(X;\m)}^{\frac{p}{p-1}}&\leq 
\left(\alpha\cdot\frac{\|f\|_{L^p(X;\m)}^p+\|g\|_{L^p(X;\m)}^p}{2} \right)^{\frac{p}{p-1}}.\label{eq:Clarkson4}
\end{align}
Then we can deduce \eqref{eq:Clarkson2} from \eqref{eq:Clarkson3} and \eqref{eq:Clarkson4} by using the closed convex body  
 $\overline{B}_1^{\,p}(0):=\{(x,y)\in\R^2\mid |x|^{\frac{p}{p-1}}+|y|^{\frac{p}{p-1}}\leq1\}$ under $p\in]1,2]$. 
\end{proof}
\begin{remark}
{\rm The content of Theorem~\ref{thm:Clarkson} 
is proved in \cite[Proposition~4.4]{GigliNobili} in the framework of infinitesimal Hilbertian metric measure space with $p$-independence of weak upper gradients in the weak sense. 
Our construction of $(1,p)$-Sobolev space is completely different from the Sobolev space $W^{1,p}(X)$ constructed in 
\cite{GigliNobili} and does not need to impose the $p$-independence of weak upper gradient.  
}
\end{remark}
\begin{proof}[{\bf Proof of Theorem~\ref{thm:Noramalcontrtaction1}}]
First we assume $T\in C^{\infty}(\R^N)$. Then the generalized normal contraction property implies $|\partial T/\partial x_k|\leq1$ on $\R^N$ for each $k\in\{1,\cdots,N\}$. By \cite[Corollary~6.1.3]{BH} or \cite[Lemma~5.2]{Kw:func}, under $u\in (\mathscr{D}_{1,p})^N
\subset (D(\mathscr{E}))^N$, we have
\begin{align}
\Gamma_{\mu}(T(u))&=\left|\Gamma_{\mu}(T(u))\right|\notag\\&=\left|
\sum_{i,j=1}^N\frac{\partial T}{\partial x_i}(u)\frac{\partial T}{\partial x_j}(u)\Gamma_{\mu}(u_i,u_j)\right|\notag\\
&\leq \sum_{i,j=1}^N\left|\frac{\partial T}{\partial x_i}(u)\right|\left|\frac{\partial T}{\partial x_j}(u)\right|
\left|\Gamma_{\mu}(u_i,u_j)\right|\notag\\&\leq \sum_{i,j=1}^N|\Gamma_{\mu}(u_i,u_j)|\label{eq:Bouleau}\\&\leq \sum_{i,j=1}^N\Gamma_{\mu}(u_i)^{\frac12}\Gamma_{\mu}(u_j)^{\frac12}\notag\\
&=\left(\sum_{i=1}^N\Gamma_{\mu}(u_i)^{\frac12} \right)^2\quad \mu\text{-a.e.}\notag
\end{align}
From this, we can easily confirm $T(u)\in \mathscr{D}_{1,p}$ for $u=(u_1,\cdots, u_N)\in(\mathscr{D}_{1,p})^N$ and 
$\mathscr{E}^{\,p}(T(u))^{\frac{1}{p}}\leq \sum_{k=1}^N\mathscr{E}^{\,p}(u_k)^{\frac{1}{p}}$.  
Next we 
take $u\in  H^{1,p}(X)^N$. Then there exists an $\|\cdot\|_{1,p}$-approximating sequence $\{u^n\}\subset (\mathscr{D}_{1,p})^N$ to $u$ in the sense that $u_k^n\to u_k$ in $(H^{1,p}(X),\|\cdot\|_{H^{1,p}})$ as  $n\to\infty$ for each $k\in\{1,\cdots, N\}$. 
In particular, $\{u^n\}$ is $L^p(X;\m)$-convergent to $u$ in the sense that $u_k^n\to u_k$ in $L^p(X;\m)$ as  $n\to\infty$ for each $k\in\{1,\cdots, N\}$. 
Moreover, $\{T(u^n)\}$ is also $L^p(X;\m)$-convergent to $T(u)$. Indeed, since $|T(u^n)|\leq\sum_{k=1}^N|u^n_k|$, we see 
that $\{T(u^n)\}$ is uniformly integrable and it converges to $T(u)$ in $\m$-measure. 
This implies the $L^p$-convergence of $\{T(u^n)\}$ to $T(u)$ (see Vitali's convergence theorem in \cite[Theorem~16.6]{RneSchilling}).

It is easy to see that $\{T(u^n)\}$ is a $\|\cdot\|_{H^{1,p}}$-bounded sequence of $\mathscr{D}_{1,p}$. 
In view of the reflexivity of $(H^{1,p}(X),\|\cdot\|_{H^{1,p}})$, we can apply Banach-Alaoglu Theorem to the topological dual space $(H^{1,p}(X),\|\cdot\|_{H^{1,p}})=(H^{1,p}(X)^{**},\|\cdot\|_{H^{1,p}(X)^{**}})$ 
of $(H^{1,p}(X)^*,\|\cdot\|_{H^{1,p}(X)^*})$ 
so that there exists a subsequence $\{T(u^{n_i})\}\subset \mathscr{D}_{1,p}$ and 
$v\in H^{1,p}(X)$ such that $\{T(u^{n_i})\}$ $H^{1,p}(X)$-weakly converges to $v$, in particular, it converges to $v$ $L^p$-weakly, because any $\ell\in L^p(X;\mu)^*=L^q(X;\mu)$ can be regarded as $\ell|_{H^{1,p}(X)^*}\in H^{1,p}(X)^*$. 
Since $\{T(u^n)\}$ converges to $T(u)$ in $L^p(X;\m)$, we see $v=T(u)$.  
From this, we have
\begin{align*}
\|T(u)\|_{L^p(X;\m)}^p+\|\Gamma_{\mu}(T(u))^{\frac12}\|_{L^p(X;\mu)}^p&=
\|T(u)\|_{H^{1,p}}^p\\
&\leq\varliminf_{i\to\infty}\|T(u^{n_i})\|_{H^{1,p}}^p\\
&\leq\varliminf_{i\to\infty}\left(\|T(u^{n_i})\|_{L^p(X;\m)}^p+
\|\Gamma_{\mu}(T(u^{n_i}))^{\frac12}\|_{L^p(X;\mu)}^p
\right)\\
&=\|T(u)\|_{L^p(X;\m)}^p+\varliminf_{i\to\infty}\|\Gamma_{\mu}(T(u^{n_i}))^{\frac12}\|_{L^p(X;\mu)}^p
\\
&\leq\|T(u)\|_{L^p(X;\m)}^p+\varliminf_{i\to\infty}\left(\sum_{k=1}^N\|\Gamma_{\mu}(u^{n_i}_k)^{\frac12}\|_{L^p(X;\mu)}\right)^{p}\\
&=\|T(u)\|_{L^p(X;\m)}^p+\left(\sum_{k=1}^N\|\Gamma_{\mu}(u_k)^{\frac12}\|_{L^p(X;\mu)}\right)^p.
\end{align*}
In the last inequality above, we use the fact that $u^{n_i}\in(\mathscr{D}_{1,p})^N$ implies $T(u^{n_i})\in \mathscr{D}_{1,p}$ and $\Gamma_{\mu}(T(u^{n_i}))^{\frac12}\leq\sum_{k=1}^N\Gamma_{\mu}(u^{n_i}_k)^{\frac12}$ $\mu$-a.e.~on $X$ shown in \eqref{eq:Bouleau}.
Consequently, we obtain that 
\begin{align*}
\mathscr{E}^{\,p}(T(u))^{\frac{1}{p}}=\|\Gamma_{\mu}(T(u))^{\frac12}\|_{L^p(X;\mu)}\leq \sum_{k=1}^N\|\Gamma_{\mu}(u_k)^{\frac12}\|_{L^p(X;\mu)}=\sum_{k=1}^N\mathscr{E}^{\,p}(u_k)^{\frac{1}{p}}
\end{align*}
and 
\begin{align*}
\|T(u)\|_{H^{1,p}(X)}\leq2^{1-\frac{1}{p}}\sum_{k=1}^N\|u_k\|_{H^{1,p}(X)}.
\end{align*} 
Finally, we consider the general case. Since $x_k\mapsto T(x_1,\cdots, x_k,\cdots, x_N)$ is a $1$-Lipschitz function on $\R$, 
it is differential almost everywhere on $\R$ in view of Rademacher Theorem and its $k$-th derivative $\frac{\partial T}{\partial x_k}$ exists for a.e.~$x_k$ and $|\frac{\partial T}{\partial x_k}|\leq 1$ a.e.~$x_k$ for each $k\in\{1,\cdots, x_N\}$. In particular, $x_k\mapsto \frac{\partial T}{\partial x_k}$ is locally integrable on $\R$. 

Take a function $j$ defined by $j(x):=e^{-\frac{1}{1-|x|^2}}$ for $|x|\leq1$ and $j(x):=0$ for $|x|>1$. 
Set $j_n(x):=n^Nj(nx)$ and $T_n(x):=j_n*T(x)-j_n*T(0):=\int_{\R^N}(j_n(x-y)-j_n(y))T(y)\d y$. Then $T_n\in C^{\infty}(\R^N)$, $T_n(0)=0$ and $\lim_{n\to\infty}T_n(x)=T(x)$ for each $x\in\R^N$.
We easily see 
\begin{align}
\frac{\partial T_n}{\partial x_k}(x)&=\int_{\R^N}\frac{\partial }{\partial x_k}j_n(x-y)T(y)\d y\notag 
\\&=-\int_{\R^N}
\frac{\partial }{\partial y_k}j_n(x-y)T(y)\d y
=\int_{\R^N}j_n(x-y)\frac{\partial T}{\partial y_k}(y)\d y,\label{eq:smoothing}
\end{align}
because 
\begin{align*}
-\int_{\R}
\frac{\partial }{\partial y_k}j_n(x-y)T(y)\d y_k&=-\lim_{-a,b\to\infty}\left[j_n(x-y)T(y) \right]_{y_k=a}^{y_k=b}+\int_{\R}j_n(x-y)\frac{\partial T}{\partial y_k}(y)\d y_k\\
&=\int_{\R}j_n(x-y)\frac{\partial T}{\partial y_k}(y)\d y_k.
\end{align*}
From \eqref{eq:smoothing}, we see $|\frac{\partial T_n}{\partial x_k}|\leq1$ on $\R^N$ for all $k\in\{1,\cdots, N\}$ and $T_n(0)=0$, hence $T_n$ is a generalized normal contraction on $\R^N$.  
Now take $u\in H^{1,p}(X)^N$. Then we already know $T_n(u)\in H^{1,p}(X)$ and 
$\|\Gamma_{\mu}(T_n(u))^{\frac12}\|_{L^p(X;\mu)}\leq\sum_{k=1}^N\|\Gamma_{\mu}(u_k)^{\frac12}\|_{L^p(X;\mu)}$, and 
$\|T_n(u)\|_{H^{1,p}}\leq 2^{1-\frac{1}{p}}\sum_{k=1}^N\|u_k\|_{H^{1,p}}$.  

Since $|T_n(u)|\leq \sum_{k=1}^N|u_k|$ and $T_n(u)\to T(u)$ as $n\to\infty$ $\m$-a.e., $\{T_n(u)\}$ $L^p(X;\m)$-converges 
to $T(u)$ as $n\to\infty$. Moreover $\Gamma_{\mu}(T_n(u))^{\frac12}\leq \sum_{k=1}^N\Gamma_{\mu}(u_k)^{\frac12}$ $\mu$-a.e. yields the 
$\|\cdot\|_{H^{1,p}}$-boundedness of $\{T_n(u)\}$. In view of the reflexivity of $(H^{1,p}(X),\|\cdot\|_{H^{1,p}})$, 
there exists an $H^{1,p}$-weakly convergent subsequence $\{T_{n_i}(u)\}$ to $T(u)$. Therefore, we can deduce 
\begin{align*}
\|T(u)\|_{L^p(X;\m)}^p+\|\Gamma_{\mu}(T(u))^{\frac12}\|_{L^p(X;\mu)}^p&=
\|T(u)\|_{H^{1,p}}^p\\
&\leq\varliminf_{i\to\infty}\|T_{n_i}(u)\|_{H^{1,p}}^p\\
&\leq\varliminf_{i\to\infty}\left(\|T_{n_i}(u)\|_{L^p(X;\m)}^p+
\|\Gamma_{\mu}(T_{n_i}(u))^{\frac12}\|_{L^p(X;\mu)}^p
\right)\\
&=\|T(u)\|_{L^p(X;\m)}^p+\varliminf_{i\to\infty}\|\Gamma_{\mu}(T_{n_i}(u))^{\frac12}\|_{L^p(X;\mu)}^p
\\
&\leq\|T(u)\|_{L^p(X;\m)}^p+\left(\sum_{k=1}^N\|\Gamma_{\mu}(u_k)^{\frac12}\|_{L^p(X;\mu)}\right)^{p}.
\end{align*}
Therefore, we obtain 
\begin{align*}
\mathscr{E}^{\,p}(T(u))^{\frac{1}{p}}=\|\Gamma_{\mu}(T(u))^{\frac12}\|_{L^p(X;\mu)}\leq \sum_{k=1}^N\|\Gamma_{\mu}(u_k)^{\frac12}\|_{L^p(X;\mu)}= \sum_{k=1}^N\mathscr{E}^{\,p}(u_k)^{\frac{1}{p}}
\end{align*} 
and 
\begin{align*}
\|T(u)\|_{H^{1,p}}\leq 2^{1-\frac{1}{p}}\sum_{k=1}^N\|u_k\|_{H^{1,p}}. 
\end{align*}
\end{proof}

\begin{proof}[{\bf Proof of Corollary~\ref{cor:Noramalcontrtaction}}]
The assertion is an easy consequence of Theorem~~\ref{thm:Noramalcontrtaction1}. We omit it. 
\end{proof}

\begin{proof}[{\bf Proof of Theorem~\ref{thm:Noramalcontrtaction2}}]
First we prove the assertion for $u,v\in \mathscr{D}_{1,p}$. In this case, by way of truncation property of carr\'e du champ 
$\Gamma_{\mu}$, we have
\begin{align*}
\Gamma_{\mu}(u\lor v)^{\frac{p}{2}}+\Gamma_{\mu}(u\land v)^{\frac{p}{2}}&=\1_{\{u\geq v\}}\Gamma_{\mu}(u)^{\frac{p}{2}}+\1_{\{u< v\}}\Gamma_{\mu}(v)^{\frac{p}{2}}
+ \1_{\{u\geq v\}}\Gamma_{\mu}(v)^{\frac{p}{2}}+\1_{\{u< v\}}\Gamma_{\mu}(u)^{\frac{p}{2}}\\
&=\Gamma_{\mu}(u)^{\frac{p}{2}}+\Gamma_{\mu}(v)^{\frac{p}{2}},\quad \mu\text{-a.e.}
\end{align*}
Integrating this, we obtain the conclusion. Next, we consider general $u,v\in H^{1,p}(X)$. 
Then there exist sequences $\{u_n\},\{v_n\}\subset \mathscr{D}_{1,p}$ converging to $u,v$ in $H^{1,p}(X)$, respectively. 
By Corollary~\ref{cor:Noramalcontrtaction}(2), $\{u_n\lor v_n\}$ and $\{u_n\land v_n\}$ are $\|\cdot\|_{H^{1,p}}$-bounded. 
Then, by way of the reflexivity of $(H^{1,p}(X),\|\cdot\|_{H^{1,p}})$,  
there exists a common subsequence $\{n_i\}$ such that $\{u_{n_i}\lor v_{n_i}\}$ and $\{u_{n_i}\land v_{n_i}\}$ $H^{1,p}$-weakly converges to  
$u\lor v$ and $u\land v$, respectively. Therefore, 
\begin{align*}
\mathscr{E}^{\,p}(u\lor v)+\mathscr{E}^{\,p}(u\land v)
&=\|\Gamma_{\mu}(u\lor v)^{\frac12}\|_{L^p(X;\mu)}^p+\|\Gamma_{\mu}(u\land v)^{\frac12}\|_{L^p(X;\mu)}^p
\\
&\leq \varliminf_{i\to\infty}\|\Gamma_{\mu}(u_{n_i}\lor v_{n_i})^{\frac12}\|_{L^p(X;\mu)}^p
+\varliminf_{i\to\infty}\|\Gamma_{\mu}(u_{n_i}\land v_{n_i})^{\frac12}\|_{L^p(X;\mu)}^p
\\
&\leq \varliminf_{i\to\infty}\left(\|\Gamma_{\mu}(u_{n_i}\lor v_{n_i})^{\frac12}\|_{L^p(X;\mu)}^p +\|\Gamma_{\mu}(u_{n_i}\land v_{n_i})^{\frac12}\|_{L^p(X;\mu)}^p\right) \\
&=\varliminf_{i\to\infty}\left(\|\Gamma_{\mu}(u_{n_i})^{\frac12}\|_{L^p(X;\mu)}^p +\|\Gamma_{\mu}(v_{n_i})^{\frac12}\|_{L^p(X;\mu)}^p\right)\\
&=\|\Gamma_{\mu}(u)^{\frac12}\|_{L^p(X;\mu)}^p +\|\Gamma_{\mu}(v)^{\frac12}\|_{L^p(X;\mu)}^p\\
&=\mathscr{E}^{\,p}(u)+\mathscr{E}^{\,p}(v).
\end{align*}
\end{proof}

\begin{proof}[{\bf Proof of Theorem~\ref{thm:potential}}]
First we prove (1) under $p\in[2,+\infty[$. The proof under $p\in]1,2]$ is similar. 
Let $\{u_n\}\subset \mathcal{L}_A$ be a minimizing sequence to ${\rm Cap}_{1,p}(A)=\inf_{u\in\mathcal{L}_A}\mathcal{E}^{\,p}_1(u_n)$. By \eqref{eq:Clarkson1}, we can see that any minimizing sequence $\{u_n\}\subset \mathcal{L}_A$ ($\lim_{n\to\infty}\mathscr{E}^{\,p}_1(u_n)={\rm Cap}_{1,p}(A)$) is $H^{1,p}$-convergent to an element $e_A\in\mathcal{L}_A$ satisfying \eqref{eq:potential} and that such an $e_A$ is unique. 

(2) By Theorem~\ref{thm:Noramalcontrtaction1}, $u:=(0\lor e_A)\land 1\in\mathcal{L}_A$ and $\mathscr{E}^{\,p}_1(u,u)\leq\mathscr{E}^{\,p}_1(e_A,e_A)={\rm Cap}_{1,p}(A)$. Hence $u=e_A$. 

(3) If $v$ has the stated property, then $e_A+\eps v\in\mathcal{L}_A$ and $\mathscr{E}^{\,p}_1(e_A+\eps v, e_A+\eps v)\geq\mathscr{E}^{\,p}_1(e_A,e_A)$ for any $\eps\geq0$. This means that 
\begin{align*}
0&\leq \left.\frac{\d}{\d\eps}\mathscr{E}^{\,p}_1(e_A+\eps v, e_A+\eps v)\right|_{\eps=0}=p\,\mathscr{E}^{\,p}_1(e_A,v)\quad \text{ for }v\in D(\mathscr{E}^{\,p})\text{ with }v\geq0\;\m\text{-a.e.~on }A.
\end{align*}
Indeed, 
\begin{align*}
\left.\frac{\d}{\d\eps}\mathscr{E}^{\,p}_1(e_A+\eps v, e_A+\eps v)\right|_{\eps=0}&=\left.\frac{p}{2}\int_X\Gamma_{\mu}(e_A+\eps v)^{\frac{p-2}{2}}\left(2\eps\Gamma_{\mu}(v,v)+2\Gamma_{\mu}(e_A,v) \right)\d\mu\right|_{\eps=0}\\
&\hspace{1cm}+\left.\frac{p}{2}\int_X|e_A+\eps v|^{p-2}(2\eps|v|^2+2e_Av)\d\m\right|_{\eps=0}\\
&=p\int_X\Gamma_{\mu}(e_A)^{\frac{p}{2}-1}\Gamma_{\mu}(e_A,v)\d\mu+p\int_Xe_A^{\,p-1}v\d\m\\
&=p\,\mathcal{E}^{\,p}_1(e_A,v).
\end{align*}
Convsersely, suppose $u\in D(\mathscr{E}^{\,p})$ satisfies the conditions in (3), then $u\in\mathcal{L}_A$ and $w-u\geq0$ $\m$-a.e.~on $A$ for any $w\in\mathcal{L}_A$. In particular, $\mathscr{E}^{\,p}_1(u,w-u)\geq0$. Hence 
\begin{align*}
\mathscr{E}^{\,p}_1(u,u)\leq\mathscr{E}^{\,p}_1(u,w)\leq \mathscr{E}^{\,p}_1(u,u)^{1-\frac{1}{p}}
\mathscr{E}^{\,p}_1(w,w)^{\frac{1}{p}},
\end{align*}
which implies $\mathscr{E}^{\,p}_1(u,u)\leq\mathscr{E}^{\,p}_1(w,w)$, proving $u=e_A$. 

(4) follows immediately from (3). 

(5) Since $A\subset B$, $e_A=e_A\land e_B$ $\m$-a.e.~on $A$. This implies $\mathscr{E}^{\,p}_1(e_A,e_A)=\mathscr{E}^{\,p}_1(e_A,e_A\land e_B)$, because $\mathscr{E}^{\,p}_1(e_A,v)=0$ if $v\in D(\mathscr{E}^{\,p})$ satisfies $v=0$ $\m$-a.e.~on $A$. 
Therefore, 
\begin{align*}
\mathscr{E}^{\,p}_1(e_A,e_A)&=\mathscr{E}^{\,p}_1(e_A,e_A\land e_B)\\
&\leq \mathscr{E}^{\,p}_1(e_A,e_A)^{1-\frac{1}{p}}\mathscr{E}^{\,p}_1(e_A\land e_B,e_A\land e_B)^{\frac{1}{p}},
\end{align*} 
which implies $\mathscr{E}^{\,p}_1(e_A,e_A)\leq \mathscr{E}^{\,p}_1(e_A\land e_B,e_A\land e_B)$, proving $e_A=e_A\land e_B$. 
\end{proof}

To prove Corollary~\ref{cor:Capacity}, we need the following lemma.

\begin{lem}\label{lem:Monotone}
We have the following: 
\begin{enumerate}
\item[{\rm (1)}] If $A,B\in\mathcal{O}(X)$ with $A\subset B$, then ${\rm Cap}_{1,p}(A)\leq{\rm Cap}_{1,p}(B)$. 
\item[{\rm (2)}] \eqref{eq:strongSubadditiveCap} holds for $A,B\in \mathcal{O}(X)$. 
\item[{\rm (3)}] Let $\{A_n\}$ be an increasing sequence of $A_n\in\mathcal{O}(X)$. Then \eqref{eq:increasing} holds. 
\end{enumerate}
\end{lem}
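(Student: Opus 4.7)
The plan is to verify the three claims in sequence, using Theorem~\ref{thm:Clarkson} (reflexivity of $H^{1,p}(X)$), Theorem~\ref{thm:potential} (existence and basic properties of equilibrium potentials), and Theorem~\ref{thm:Noramalcontrtaction2} (strong subadditivity of $\mathscr{E}^{\,p}$) as the three main ingredients. Part (1) is immediate from the definitions: if $A\subset B$ are open, any $u\in\mathcal{L}_B$ satisfies $u\geq1$ $\m$-a.e.\ on $A$, so $\mathcal{L}_B\subset\mathcal{L}_A$, and taking the infimum over the larger class $\mathcal{L}_A$ gives ${\rm Cap}_{1,p}(A)\leq{\rm Cap}_{1,p}(B)$.

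For part (2), I may assume ${\rm Cap}_{1,p}(A),{\rm Cap}_{1,p}(B)<\infty$ (otherwise the inequality is trivial), so $A,B\in\mathcal{O}_0(X)$ and the equilibrium potentials $e_A,e_B$ supplied by Theorem~\ref{thm:potential}(1) are available. Because $e_A=1$ $\m$-a.e.\ on $A$ and $e_B=1$ $\m$-a.e.\ on $B$ by Theorem~\ref{thm:potential}(2), we have $e_A\vee e_B\in\mathcal{L}_{A\cup B}$ and $e_A\wedge e_B\in\mathcal{L}_{A\cap B}$. The argument now rests on two observations: first, Theorem~\ref{thm:Noramalcontrtaction2} yields
\[
\mathscr{E}^{\,p}(e_A\vee e_B)+\mathscr{E}^{\,p}(e_A\wedge e_B)\leq\mathscr{E}^{\,p}(e_A)+\mathscr{E}^{\,p}(e_B);
\]
second, the pointwise identity $|e_A\vee e_B|^p+|e_A\wedge e_B|^p=|e_A|^p+|e_B|^p$ $\m$-a.e.\ (obtained by splitting on $\{e_A\geq e_B\}$ and its complement) makes the $L^p$ contribution to $\mathscr{E}^{\,p}_1$ additive. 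Summing the two statements and using ${\rm Cap}_{1,p}(A\cup B)\leq\mathscr{E}^{\,p}_1(e_A\vee e_B)$ together with ${\rm Cap}_{1,p}(A\cap B)\leq\mathscr{E}^{\,p}_1(e_A\wedge e_B)$ yields \eqref{eq:strongSubadditiveCap}.

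For part (3), set $A:=\bigcup_n A_n$ (open) and $M:=\sup_n{\rm Cap}_{1,p}(A_n)$; assume $M<\infty$, otherwise there is nothing to prove. By part (1) the sequence $\{{\rm Cap}_{1,p}(A_n)\}$ is nondecreasing with limit $M$, and by Theorem~\ref{thm:potential}(2),(5) the equilibrium potentials $e_{A_n}$ satisfy $0\leq e_{A_n}\leq1$ $\m$-a.e.\ and are $\m$-a.e.\ nondecreasing in $n$. Let $e:=\sup_n e_{A_n}$; then $0\leq e\leq1$ and $e=1$ $\m$-a.e.\ on $A$. Fatou's lemma together with $\|e_{A_n}\|_{L^p(X;\m)}^p\leq M$ gives $e\in L^p(X;\m)$, and dominated convergence (with majorant $e$) yields $e_{A_n}\to e$ strongly in $L^p(X;\m)$. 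The crux is to upgrade this to $e\in H^{1,p}(X)$ with $\mathscr{E}^{\,p}_1(e)\leq M$: since $\|e_{A_n}\|_{H^{1,p}}^p={\rm Cap}_{1,p}(A_n)\leq M$, the sequence is norm-bounded in $H^{1,p}(X)$, which is reflexive by Theorem~\ref{thm:Clarkson}, so a subsequence $\{e_{A_{n_i}}\}$ converges weakly in $H^{1,p}(X)$ to some $\hat e\in H^{1,p}(X)$. The continuous inclusion $H^{1,p}(X)\hookrightarrow L^p(X;\m)$ transports weak limits, so combined with the strong $L^p$-convergence to $e$ it forces $\hat e=e$ $\m$-a.e. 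Lower semicontinuity of $\|\cdot\|_{H^{1,p}}$ under weak convergence then gives
\[
\mathscr{E}^{\,p}_1(e)=\|e\|_{H^{1,p}}^p\leq\varliminf_{i\to\infty}\|e_{A_{n_i}}\|_{H^{1,p}}^p=\varliminf_{i\to\infty}{\rm Cap}_{1,p}(A_{n_i})\leq M,
\]
and since $e\in\mathcal{L}_A$ we conclude ${\rm Cap}_{1,p}(A)\leq M$, which is \eqref{eq:increasing}. The main obstacle is precisely this identification step: the weak $H^{1,p}$-limit $\hat e$ must be reconciled with the $\m$-a.e.\ monotone limit $e$, which forces me to run the monotone/dominated convergence argument in $L^p(X;\m)$ in parallel with the reflexivity-based weak extraction in $H^{1,p}(X)$ and then match the two limits through the continuity of the inclusion $H^{1,p}(X)\hookrightarrow L^p(X;\m)$. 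Once this is in place, lower semicontinuity of the norm and the definition of ${\rm Cap}_{1,p}$ finish the argument; the parts (1) and (2) are by comparison routine.
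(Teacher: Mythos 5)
Your proof is correct, and parts (1) and (2) run along the same lines as the paper's (in (2) you are in fact more careful than the paper: you state explicitly the pointwise identity $|e_A\vee e_B|^p+|e_A\wedge e_B|^p=|e_A|^p+|e_B|^p$ that is needed to pass from the strong subadditivity of $\mathscr{E}^{\,p}$ in Theorem~\ref{thm:Noramalcontrtaction2} to that of $\mathscr{E}^{\,p}_1$, a step the paper leaves implicit). Part (3), however, follows a genuinely different route. The paper fixes $n>m$, notes $\frac{e_{A_n}+e_{A_m}}{2}\in\mathcal{L}_{A_m}$, and feeds this into Clarkson's inequality \eqref{eq:Clarkson1} (resp.\ \eqref{eq:Clarkson2}) to bound $\mathscr{E}^{\,p}_1\bigl(\frac{e_{A_n}-e_{A_m}}{2}\bigr)$ by $\frac12({\rm Cap}_{1,p}(A_n)+{\rm Cap}_{1,p}(A_m))-{\rm Cap}_{1,p}(A_m)$, which tends to $0$; hence $\{e_{A_n}\}$ is \emph{strongly} Cauchy in $H^{1,p}$, its limit $u$ is identified with $e_A$ via the variational characterization of Theorem~\ref{thm:potential}(3), and one gets the equality ${\rm Cap}_{1,p}(A)=\sup_n{\rm Cap}_{1,p}(A_n)$. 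You instead use the monotonicity $e_{A_n}\leq e_{A_{n+1}}$ from Theorem~\ref{thm:potential}(5) to build the pointwise limit $e$, extract a \emph{weak} $H^{1,p}$-limit by reflexivity, match the two limits through the (injective, by Assumption~\ref{asmp:closability}) embedding into $L^p(X;\m)$, and conclude by weak lower semicontinuity of the norm. Your argument yields only the inequality \eqref{eq:increasing} — which is all the lemma claims — whereas the paper's yields equality and the identification of the limit as $e_A$; on the other hand, yours needs Theorem~\ref{thm:potential}(5) but not a second invocation of Clarkson, and it would survive in settings where one has reflexivity and monotone potentials but no uniform convexity. Both are complete proofs of the stated lemma.
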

\begin{proof}[{\bf Proof}]
(1) is clear. (2) For $A,B\in\mathcal{O}_0(X)$, by Theorem~\ref{thm:Noramalcontrtaction2}, 
\begin{align*}
{\rm Cap}_{1,p}(A\cup B)+{\rm Cap}_{1,p}(A\cap B)&\leq \mathscr{E}^{\,p}_1(e_A\lor e_B,e_A\lor e_B)+\mathscr{E}^{\,p}_1(e_A\land  e_B,e_A\lor e_B)\\
&\leq \mathscr{E}^{\,p}_1(e_A,e_A)+\mathscr{E}^{\,p}_1(e_B,e_B)\\
&={\rm Cap}_{1,p}(A)+{\rm Cap}_{1,p}(B).
\end{align*}
(3) Suppose $\sup_{n\in\mathbb{N}}{\rm Cap}_{1,p}(A_n)<\infty$. We prove the assertion for the case $p\in[2,+\infty[$. 
The proof for the case $p\in]1,2]$ is similar based on \eqref{eq:Clarkson2}. 
Suppose $n>m$. Then $\frac{e_{A_n}+e_{A_m}}{2}\in\mathcal{L}_{A_m}$. 
According to the inequality \eqref{eq:Clarkson1} of Clarkson type,  
\begin{align*}
\mathscr{E}^{\,p}_1\left(\frac{e_{A_n}-e_{A_m}}{2} \right)+{\rm Cap}_{1,p}(A_m)&\leq 
\mathscr{E}^{\,p}_1\left(\frac{e_{A_n}-e_{A_m}}{2} \right)+\mathscr{E}^{\,p}_1\left(\frac{e_{A_n}+e_{A_m}}{2} \right)\\
&\leq 
\frac{\mathscr{E}^{\,p}_1(e_{A_n})+\mathscr{E}^{\,p}_1(e_{A_m})}{2}\\
&=\frac12\left({\rm Cap}_{1,p}(A_n)+{\rm Cap}_{1,p}(A_m) \right),
\end{align*}
which implies that $\{e_{A_n}\}_{n\in\mathbb{N}}$ converges to some $u\in D(\mathscr{E}^{\,p})$ in $H^{1,p}$-norm. 
Clearly $u=1$ 
$\m$-a.e.~on $A:=\bigcup_{n=1}^{\infty}A_n$. If $v\in D(\mathscr{E}^{\,p})$ is non-negative $\m$-a.e.~on $A$, then 
$\mathscr{E}^{\,p}_1(u,v)=\lim_{n\to\infty}\mathscr{E}^{\,p}_1(e_{A_n},v)\geq0$. Hence $u=e_A$ by Lemma~\ref{thm:potential}(3) 
and $\sup_{n\in\mathbb{N}}{\rm Cap}_{1,p}(A_n)=\lim_{n\to\infty}\mathscr{E}^{\,p}_1(e_{A_n},e_{A_n})=\mathscr{E}^{\,p}_1(u,u)={\rm Cap}_{1,p}(A)$. 
\end{proof}
\begin{proof}[{\bf Proof of Corollary~\ref{cor:Capacity}}]
The conclusion follows Lemma~\ref{lem:Monotone} and \cite[Theorem~A.1.2]{FOT}. 
\eqref{eq:capacitable} is a consequence of \cite[Theorem~A.1.1]{FOT}. 
\eqref{eq:strongSubadditiveCap} directly follows from Lemma~\ref{lem:Monotone}(3). 
\end{proof}
\section{Examples}
\begin{example}[Abstract Wiener space]
{\rm 
Let  $(B,H,\mu)$ be an abstract Wiener space, i.e. $B$ is a real separable Banach space and $H$ is a real separable
Hilbert space that is densely and continuously imbedded in $B$ and $\mu$ is a Gaussian measure on $B$ satisfying 
\begin{align}
\int_B\exp\left(\sqrt{-1}{}_B\langle w,\varphi\rangle_{B^*} \right)\d\mu=\exp\left(-\frac12|\varphi|_{H^*}^2\right)\quad \text{ for }\quad \varphi\in B^*\subset H^*.
\end{align}
(see Shigekawa~\cite[Definition~1.1]{ShigekawaText}). Let $(T_t^{\rm OU})_{t\geq0}$ be the Ornstein-Uhlenbeck semigroup defined by 
\begin{align}
T_t^{\rm OU}f(x):=\int_Bf(e^{-t}x+\sqrt{1-e^{-2t}}y)\mu(\d y)\quad \text{ for }\quad f\in\mathscr{B}(B)_+.
\end{align}
It is known that $T_t^{\rm OU}$ is $\mu$-symmetric and 
$(T_t^{\rm OU})_{t\geq0}$ can be extended to a strongly continuous contraction semigroup in $L^p(B;\mu)$ for $p\geq1$ (see  
Shigekawa~\cite[Proposition~2.4]{ShigekawaText}). 
Let $(\mathscr{E}^{\rm OU},D(\mathscr{E}^{\rm OU}))$ be the Dirichlet form on 
$L^2(B;\mu)$ associated to  $(T_t^{\rm OU})_{t\geq0}$. It is known that $(\mathscr{E}^{\rm OU},D(\mathscr{E}^{\rm OU}))$ is associated to a diffusion process ${\bf X}^{\rm OU}$ called \emph{Ornstein-Uhlenbeck process}. 
A function $f:B\to\R$ is called $H$-differentiable at $x\in B$ if there exists an $h^*\in H^*$ such that 
$\left.\frac{\d}{\d t}f(x+th)\right|_{t=0}={}_H\langle h,h^*\rangle_{H^*}$ $h\in H$. Then $h^*$ is called the $H$-derivative of $f$ at $x$ denoted by $D_Hf(x)$. 
Let 
\begin{align*}
\mathcal{P}:&=\{F:B\to\R\mid \text{ there exists }n\in\N, \varphi_1,\cdots,\varphi_n\in B^*, \\
&\hspace{3cm} f\in C^{\infty}(\R^n) \text{ is a polynomial on }\R^n\text{ such that }\\
&\hspace{7cm} F(x)=f(\langle x,\varphi_1\rangle,\cdots,\langle x,\varphi_n\rangle)\}
\end{align*}
be the family of polynomial functions on $B$. For $f\in\mathcal{P}$, its $H^{1,p}$-norm $\|f\|_{H^{1,p}}$ is given by 
\begin{align*}
\|f\|_{H^{1,p}}^p:=\int_B|f|^p\d\mu+\int_B|D_Hf|_{H^*}^p\d\mu<\infty.
\end{align*}
When $p=2$, the $H^{1,2}$-completion of $(\mathcal{P},\|\cdot\|_{H^{1,2}})$ is nothing but the Dirichlet form 
$(\mathscr{E}^{\rm OU},D(\mathscr{E}^{\rm OU}))$ and it admits carr\'e du champ $\Gamma$ given by 
$\Gamma(f)=|D_Hf|_{H^*}^2$, where $D_H:D(\mathscr{E}^{\rm OU})\to L^2(B,H^*;\mu)$ is a natural extension of 
$D_H$. It is  shown in \cite[Theorem~4.4]{ShigekawaText} that the norm $\|\cdot\|_{H^{1,p}}$ on $\mathcal{P}$ is equivalent to another 
norm $\|\cdot\|_{W^{1,p}}$ on $\mathcal{P}$ based on the Ornstein-Uhlenbeck operator $L^{\rm OU}$ 
associated to $(T_t^{\rm OU})_{t\geq0}$. In particular, our Assumption~\ref{asmp:closability} for $\mathscr{D}=\mathcal{P}$ is satisfied. 
Then our $(1,p)$-Sobolev space $(H^{1,p}(B),\|\cdot\|_{H^{1,p}})$ coincides with 
the $H^{1,p}$-completion of $\mathcal{P}(\subset L^p(B;\mu))$. 
In the case of abstract Wiener space, thanks to \cite{Day, RoecknerSchmuland}, 
it is easy to see the uniform convexity of $(H^{1,p}(B),\|\cdot\|_{H^{1,p}})$ without using Theorem~\ref{thm:Clarkson}. However, the contents of Theorem~\ref{thm:potential}(3),(4),(5) are not known for abstract Wiener space and the $(1,p)$-Sobolev space on it. 
}
\end{example}

\begin{example}[{\bf {\boldmath${\sf RCD}(K,\infty)$}-space}]
{\rm A metric measure space is a triple $(X,{\sf d},\m)$ such that 
\begin{align*}
(X,{\sf d}):\quad &\text{ is a complete separable metric space},\\
\m\ne0:\quad&\text{ is non-negative and boundedly finite Borel measure}.
\end{align*}
Any metric open ball is denoted by $B_r(x):=\{y\in X\mid {\sf d}(x,y)<r\}$ for $r>0$ and $x\in X$. 
A subset $B$ of $X$ is said to be bounded if it is included in a metric open ball.    
Denote by $C([0,1],X)$ the space of continuous curve defined on the unit interval $[0,1]$ equipped the distance ${\sf d}_{\infty}(\gamma,\eta):=\sup_{t\in[0,1]}{\sf d}(\gamma_t,\eta_t)$ for every $\gamma,\eta\in C([0,1],X)$. This turn $C([0,1],X)$ into complete separable metric space.   
Next we consider the set of $q$-absolutely continuous curves, denoted by $AC^q([0,1],X)$, is the subset of $\gamma\in C([0,1],X)$ so that there exists  $g\in L^q(0,1)$ satisfying 
\begin{align*}
{\sf d}(\gamma_t,\gamma_s)\leq\int_s^tg(r)\d r,\quad s<t\text{ in }[0,1].
\end{align*}
Recall that for any $\gamma\in AC^q([0,1],X)$, there exists a minimal a.e.~function $g\in L^q(0,1)$ satisfying the above, called {\it metric speed} denoted by $|\dot\gamma_t|$, which is defined as 
$|\dot\gamma_t|:=\lim_{h\downarrow0}{\sf d}(\gamma_{t+h}, \gamma_t)/h$ for $\gamma\in AC^q([0,1],X)$, 
$|\dot\gamma_t|:=+\infty$ otherwise. 
We define the kinetic energy functional $C([0,1],X)\ni\gamma\mapsto {\sf Ke}_t(\gamma):=\int_0^1|\dot{\gamma}_t|^q\d t$, if $\gamma\in AC^q([0,1],X)$, ${\sf Ke}_t(\gamma):=+\infty$ otherwise. 
For every $t\in[0,1]$, we define the {\it evaluation map} at time $t$ as follows: 
${\sf e}_t:C([0,1],X)\to X$, ${\sf e}_t(\gamma):=\gamma_t$ for $\gamma\in C([0,1]\to X)$. 
We easily see that ${\sf e}_t$ is a $1$-Lipschitz map.
\begin{defn}[\bf {\boldmath$q$}-test plan]\label{def:$q$-test}
{\rm Let $(X,{\sf d},\m)$ be a metric measure space and $q\in]1,+\infty[$.
A measure {\boldmath$\pi$}$\in\mathscr{P}(C([0,1],X))$ is said to be a $q$-test plan, provided 
\begin{enumerate}
\item[(i)]\label{item:qtest1} there exists $C>0$ so that $({\sf e}_t)_{\sharp}${\boldmath$\pi$}$\leq C\m$ for every $t\in[0,1]$;
\item[(ii)]\label{item:qtest2} we have $\int_{C([0,1],X)}{\sf Ke}_q(\gamma)${\boldmath$\pi$}$(\d\gamma)<\infty$. 
\end{enumerate} 
Moreover, we say that {\boldmath$\pi$} is an {\it $\infty$-test plan} if, instead of (ii), 
we have ${\rm Lip}(\gamma)\leq L$ for {\boldmath$\pi$}-a.e.~$\gamma$ (and thus for every $\gamma$ in the support of {\boldmath$\pi$} by the lower semi continuity  of the global Lipschitz constant with respect to uniform convergence).    
}
\end{defn}

\begin{defn}[{\bf Sobolev spaces {\boldmath$S^{\hspace{0.03cm}p}(X)$ and {\boldmath$W^{1,p}(X)$}}}]\label{def:Sobolev}
{\rm Let $(X,{\sf d},\m)$ be a metric measure space and $p\in]1,+\infty[$.
A Borel function $f$ belongs to $S^{\hspace{0.03cm}p}(X)$, 
provided there exists a $G\in L^p(X;\m)$,  
called {\it $p$-weak upper gradient} of $f$ so that 
\begin{align}
\int_{C([0,1],X)}|f(\gamma_1)-f(\gamma_0)|\text{\boldmath$\pi$}(\d\gamma)\leq \int_{C([0,1],X)}\int_0^1
G(\gamma_t)|\dot{\gamma}_t|\d t\text{\boldmath$\pi$}(\d\gamma),\quad \text{$\forall${\boldmath$\pi$} 
$q$-test plan}. \label{eq:SpSobolev}
\end{align} 
the set of $p$-weak upper gradient of a given Borel function $f$ is a closed convex subset of $L^p(X;\m)$. The minimal $p$-weak upper gradient, denoted by $|Df|_p$ is then the element of 
minimal $L^p$-norm in this class. Also, by making use of the lattice property of the set of $p$-weak 
upper gradient, such a minimality is also in the $\m$-a.e. sense (see \cite[Proposition~2.17 and Theorem~2.18]{Ch:metmeas}). 
It should be noted that $S^{\hspace{0.03cm}p}(X)\cap L^{\infty}(X;\m)$ is an algebra.
The Sobolev space,  
denoted by $W^{1,p}(X)$, is $L^p(X;\m)\cap S^{\,p}(X)$ as a set, equipped with the norm 
\begin{align*}
\|f\|_{W^{1,p}(X)}:=\left(\|f\|^p_{L^p(X;\m)}+\| |Df|_p\|^p_{L^p(X;\m)} \right)^{\frac{1}{p}} ,\quad f\in  W^{1,p}(X). 
\end{align*}
}
\end{defn}
\begin{remark}
{\rm The definition of $W^{1,p}(X)$ is based on the mass transport theory. 
There are various way to define $(1,p)$-Sobolev spaces by Cheeger, Shanmugalingam and etc. 
For general $p\in]1,+\infty[$, they are equivalent to each other (see 
\cite[Theorem~7.4]{AGS_Sobolev} for general $p\in]1,+\infty[$,  
\cite[Remark~2.27 and Theorem~2.28]{GPLecture} for the case $p=2$).   
}
\end{remark}
It is a standard fact that $(W^{1,p}(X),\|\cdot\|_{W^{1,p}(X)})$ is a Banach space in view of the weak lower semi continuity of weak upper gradient (see \cite[Theorem~2.7]{Ch:metmeas}). 
It should be noted that $W^{1,p}(X)\cap L^{\infty}(X;\m)$ is an algebra.  
It is in general false 
that $(W^{1,p}(X),\|\cdot\|_{W^{1,p}(X)})$ is reflexive and 
$p=2$ does not imply that $(W^{1,2}(X),\|\cdot\|_{W^{1,2}(X)})$ is a Hilbert space. 
When the latter situation occurs, we say that $(X,{\sf d},\m)$ is {\it infinitesimally Hilbertian} (see 
\cite{Gigli:OntheDifferentialStr}). Equivalently, we call $(X,{\sf d},\m)$  infinitesimally Hilbertian 
provided the following {\it parallelogram identity} holds:
\begin{align}
2|Df|_2^2+2|Dg|_2^2=|D(f+g)|_2^2+|D(f-g)|_2^2, \quad \m\text{-a.e.} \quad \forall f,g\in W^{1,2}(X).\label{eq:parallelogram}
\end{align}
Under the infinitesimally Hilbertian condition for $(X,{\sf d},\m)$, 
$(W^{1,p}(X),\|\cdot\|_{W^{1,p}(X)})$ 
becomes a uniformly convex Banach space for $p\in]1,+\infty[$, hence it is reflexive (see \cite[Proposition~4.4]{GigliNobili}). 
Denote by ${\rm Lip}(X)$ the family of Lipschitz functions on $(X,{\sf d})$ and let ${\rm lip}(f)(x):=\varlimsup_{y\to x}\frac{|f(y)-f(x)|}{{\sf d}(x,y)}$ if $x$ is an accumulation point. When $x$ is an isolated point, we set ${\rm lip}(f)(x):=0$. 
It is easy to see that $\{f\in {\rm Lip}(X)\mid {\rm lip}(f)\in L^p(X;\m)\}\subset S^{\,p}(X)$, 
consequently, $\{f\in {\rm Lip}(X)\cap L^p(X;\m)\mid  {\rm lip}(f)\in L^p(X;\m)\}\subset W^{1,p}(X)$, 
 because 
\begin{align*}
|f(\gamma_1)-f(\gamma_0)|\leq \int_0^1{\rm lip}(f)(\gamma_t)|\dot{\gamma}_t|\d t
\end{align*}
for any $\gamma\in C([0,1],X)$ (see \cite[(2.17)]{GPLecture}. In particular, ${\rm Lip}(X)_{bs}:=\{f\in {\rm Lip}(X)\mid {\rm supp}[f]\text{ is bounded }\}\subset W^{1,p}(X)$. 
 
Under \eqref{eq:parallelogram}, we can give a bilinear form $\langle D\cdot ,D\cdot\rangle:W^{1,2}(X)\times W^{1,2}(X)\to L^1(X;\m)$ which is defined by 
\begin{align*}
\langle Df,Dg\rangle:=\frac14|D(f+g)|_2^2-\frac14|D(f-g)|_2^2,\quad f,g\in W^{1,2}(X).
\end{align*}
Moreover, under the infinitesimally Hilbertian condition, 
the bilinear form $(\mathscr{E},D(\mathscr{E}))$ defined by 
\begin{align*}
D(\mathscr{E}):=W^{1,2}(X),\quad \mathscr{E}(f,g):=\frac12\int_X\langle Df,Dg\rangle\d \m
\end{align*}
is a strongly local Dirichlet form on $L^2(X;\m)$ admitting carr\'e du champ.

Denote by $(P_t)_{t\geq0}$ the semigroup on $L^2(X;\m)$ associated with $(\mathscr{E},D(\mathscr{E}))$. We call $(P_t)_{t\geq0}$ \emph{heat flow}.  Under \eqref{eq:parallelogram}, let $(\Delta, D(\Delta))$ be the 
$L^2$-generator associated with $(\mathscr{E},D(\mathscr{E}))$  defined by
\begin{align}
\left\{\begin{array}{rl} D(\Delta)&:=\{u\in D(\mathscr{E})\mid \text{there exists } w\in L^2(X;\m)\text{ such that }\\
&\hspace{3cm}\mathscr{E}(u,v)=-\int_Xwv\,\d\m\quad \text{ for any }\quad v\in D(\mathscr{E})\}, \\ \Delta u&:=w\quad\text{ for } w\in L^2(X;\m)\quad\text{specified as above,} \end{array}\right.\label{eq:generatorL2*}
\end{align}

\begin{defn}[{\bf {\boldmath${\sf RCD}(K,\infty)$}-spaces}]
{\rm A metric measure space $(X,{\sf d},\m)$ is said to be an {\it {\sf RCD}$(K,\infty)$-space} if 
it satisfies 
the following conditions: 
\begin{enumerate}
\item[\rm(1)]
$(X, {\sf d}, \m)$ is infinitesimally Hilbertian. 

\item[\rm(2)]
There exist $x_0 \in X$ and constants $c, C > 0$ such that 
$\m(B_r(x_0)) \le C \e^{c r^2}$. 

\item[\rm(3)]
If $f \in W^{1,2}(X)$ satisfies 
$| D f |_2 \le 1$ $\m$-a.e., then $f$ has a $1$-Lipschitz representative. 

\item[\rm(4)]
For any $f \in D ( \Delta )$ 
with $\Delta f \in W^{1,2}(X)$ 
and $g \in D ( \Delta ) \cap L^\infty (X; \m)$ 
with $g \ge 0$ and $\Delta g \in L^\infty (X; \m)$, 
\begin{align*}
\frac12 \int_X | D f |_2^2 \Delta g \, \d \m 
- \int_X \langle D f, D \Delta f \rangle g \, \d \m 
\ge 
K \int_X | D f |_2^2 g \, \d \m 
\end{align*}
\end{enumerate}
}
\end{defn}

Let ${\rm Test}(X)$ be the family of \emph{test functions} defined by 
 \begin{align*}
 {\rm Test}(X):=\{u\in D(\Delta)\cap L^{\infty}(X;\m)\mid \Gamma(u)^{\frac12}=|Du|_2\in L^{\infty}(X;\m)\text{ and }\Delta u\in D(\mathscr{E})\}. 
 \end{align*}
 It is shown in \cite[Lemma~3.2]{Sav14} that ${\rm Test}(X)$ forms an algebra under ${\sf RCD}(K,\infty)$-condition for $(X,{\sf d},\m)$.

\begin{thm}\label{thm:Coincidence}
Let $K\in\R$ and $p\in]1,+\infty[$. Suppose that $(X,{\sf d},\m)$ is an {\sf RCD}$(K,\infty)$-space. Then 
$(\mathscr{D}_{1,p},\|\cdot\|_{H^{1,p}})$ is closable on $L^p(X;\m)$ and 
its closure $(H^{1,p}(X),\|\cdot\|_{H^{1,p}})$ on $L^p(X;\m)$ coincides with $(W^{1,p}(X),\|\cdot\|_{W^{1,p}})$. 
\end{thm}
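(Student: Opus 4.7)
The plan is to identify $\mathscr{D}_{1,p}$ isometrically with a subspace of the Banach space $(W^{1,p}(X),\|\cdot\|_{W^{1,p}})$, which makes closability automatic, and then to recover $W^{1,p}(X)$ from it via density of Lipschitz functions of bounded support.

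First, I will set up the correct choice of $\mu$. The infinitesimal Hilbertianity built into the definition of ${\sf RCD}(K,\infty)$ ensures that the Cheeger form $(\mathscr{E},D(\mathscr{E}))$ with $D(\mathscr{E})=W^{1,2}(X)$ is a strongly local, quasi-regular symmetric Dirichlet form on $L^2(X;\m)$ admitting carr\'e du champ $\Gamma(f)=|Df|_2^2$ $\m$-a.e. Hence $\m$ itself is $\mathscr{E}$-dominant, and by the remark after \cite[Definition~3.4]{DelloSuzuki:LademacherSobolevToLip} quoted in the body of the paper, it is already minimal $\mathscr{E}$-dominant. I therefore take $\mu=\m$ and $\mathscr{D}=\mathscr{D}_{1,p}$. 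The essential analytic input is the $p$-independence of minimal weak upper gradients on ${\sf RCD}(K,\infty)$-spaces: for every $f\in W^{1,p}(X)\cap W^{1,2}(X)$ one has $|Df|_p=|Df|_2$ $\m$-a.e. Granting this, for $f\in\mathscr{D}_{1,p}$ we obtain $\Gamma(f)^{1/2}=|Df|_p$, and hence
\begin{align*}
\|f\|_{H^{1,p}}^p=\|f\|_{L^p(X;\m)}^p+\int_X|Df|_p^p\,\d\m=\|f\|_{W^{1,p}(X)}^p,
\end{align*}
which yields a norm-preserving linear inclusion $(\mathscr{D}_{1,p},\|\cdot\|_{H^{1,p}})\hookrightarrow(W^{1,p}(X),\|\cdot\|_{W^{1,p}})$.

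Closability then follows formally. Given any $\mathscr{E}^{\,p}$-Cauchy sequence $\{f_n\}\subset\mathscr{D}_{1,p}$ with $f_n\to 0$ in $L^p(X;\m)$, the isometric inclusion makes $\{f_n\}$ Cauchy in the Banach space $W^{1,p}(X)$; its $W^{1,p}$-limit must coincide with $0$ in $L^p$, which by uniqueness of $L^p$-representatives forces the $W^{1,p}$-limit to be $0$, so $\mathscr{E}^{\,p}(f_n)\to 0$. This verifies Assumption~\ref{asmp:closability} and realizes $H^{1,p}(X)$ as the $\|\cdot\|_{W^{1,p}}$-closure of $\mathscr{D}_{1,p}$ inside $W^{1,p}(X)$. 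To upgrade the inclusion $H^{1,p}(X)\subset W^{1,p}(X)$ to equality, I will show that $\mathscr{D}_{1,p}$ is $\|\cdot\|_{W^{1,p}}$-dense in $W^{1,p}(X)$. The exponential volume growth condition~(2) in the definition of ${\sf RCD}(K,\infty)$ gives $\m$-finiteness of bounded sets, so ${\rm Lip}(X)_{bs}\subset L^p(X;\m)\cap W^{1,2}(X)\cap W^{1,p}(X)\subset\mathscr{D}_{1,p}$; the $\|\cdot\|_{W^{1,p}}$-density of ${\rm Lip}(X)_{bs}$ in $W^{1,p}(X)$ on RCD spaces, proved via mollified heat-flow regularization together with cutoff by elements of ${\rm Test}(X)$ (an algebra by \cite[Lemma~3.2]{Sav14}), then closes the argument.

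The hardest step will be the $p$-independence of minimal weak upper gradients: this is the keystone bridging the Hilbertian structure at the $L^2$-level with the Banach structure at the $L^p$-level, and it ultimately rests on Bakry-\'Emery type gradient estimates for the heat semigroup $(P_t)_{t\geq 0}$ together with careful approximation of $W^{1,p}$-functions by heat-flow regularizations. Once it is granted, the remaining ingredients reduce to soft-analysis manipulations via the isometric embedding and standard density results for Lipschitz functions on RCD spaces.
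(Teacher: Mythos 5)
Your first half tracks the paper's argument exactly: both proofs hinge on the Gigli--Han $p$-independence of weak upper gradients \cite[Theorem~3.4]{GigliHan} to identify $\Gamma(u)^{\frac12}$ with $|Du|_p$ for $u\in\mathscr{D}_{1,p}$, which yields the norm-preserving inclusion of $(\mathscr{D}_{1,p},\|\cdot\|_{H^{1,p}})$ into the Banach space $(W^{1,p}(X),\|\cdot\|_{W^{1,p}})$, and closability plus the inclusion $H^{1,p}(X)\subset W^{1,p}(X)$ then follow formally, just as you say. Your observation that ${\rm Lip}(X)_{bs}\subset\mathscr{D}_{1,p}$ is also correct.

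The gap is in the density step, which is where the paper does its real work. As described, your plan does not cohere: heat-flow regularization never produces functions of bounded support (so it cannot by itself land in ${\rm Lip}(X)_{bs}$), multiplying by a cutoff reintroduces a Leibniz cross term you would have to control, and --- most importantly --- any such regularization scheme only hands you approximants converging in $L^p$ with convergent (or merely bounded) energies; you still must upgrade this weak/in-energy convergence to convergence in $\|\cdot\|_{W^{1,p}}$, and that upgrade is nowhere in your sketch. The paper's route is: truncate and cut off to reduce to $u\in W^{1,p}(X)\cap L^{\infty}(X;\m)\cap L^2(X;\m)$; show $P_tu\in{\rm Test}(X)\cap W^{1,p}(X)$ via the gradient contraction $|DP_tu|_p^p\leq e^{-pKt}P_t|Du|_p^p$ \cite[Proposition~3.1]{GigliHan} and the $L^{\infty}$-to-Lipschitz estimate of \cite[Proposition~6.8]{ERST}; and then convert the resulting $\|\cdot\|_{W^{1,p}}$-bounded family into a strongly convergent sequence of convex combinations (which remain in ${\rm Test}(X)\cap W^{1,p}(X)\subset\mathscr{D}_{1,p}$) using uniform convexity and Kakutani's Banach--Saks theorem \cite{Kakutani}. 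If you prefer to keep ${\rm Lip}(X)_{bs}$ as your dense class, the honest way is to invoke the Ambrosio--Gigli--Savar\'e density theorem \cite{AGS_Sobolev}; but that gives density in energy, so you would still need the same Radon--Riesz/Banach--Saks step (available to you via the uniform convexity from Theorem~\ref{thm:Clarkson} and \cite[Proposition~4.4]{GigliNobili}) to conclude density in norm. Either way, the weak-to-strong upgrade is an essential missing ingredient, not a soft-analysis afterthought.
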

\begin{proof}[{\bf Proof}]
It is easy to see $(H^{1,2}(X),\|\cdot\|_{H^{1,2}})=(W^{1,2}(X),\|\cdot\|_{W^{1,2}})$. 
Suppose that {\sf RCD}$(K,\infty)$-condition holds for $(X,{\sf d},\m)$. Then, by \cite[Theorem~3.4]{GigliHan}, 
for any given $p,q\in ]1,+\infty[$, $u\in W^{1,p}(X)$ such that $u,|Du|_p\in L^q(X;\m)$ implies 
$u\in W^{1,q}(X)$ and $|Du|_p=|Du|_q$ $\m$-a.e. From this, we can conclude that 
$\mathscr{D}_{1,p}\subset W^{1,p}(X)$ with $\Gamma(u)^{\frac12}=|Du|_p$ $\m$-a.e., hence 
$\|u\|_{H^{1,p}}=\|u\|_{W^{1,p}}$ for any $u\in \mathscr{D}_{1,p}$. 
Since ($W^{1,p}(X),\|\cdot\|_{W^{1,p}})$ is a Banach space, the closability of $(\mathscr{D}_{1,p},\|\cdot\|_{H^{1,p}})$ on $L^p(X;\m)$ is clear.  
Thus, we have the inclusion 
$H^{1,p}(X)\subset W^{1,p}(X)$ with $\|u\|_{H^{1,p}}=\|u\|_{W^{1,p}}$ for $u\in H^{1,p}(X)$. 

 We claim that  ${\rm Test}(X)\cap W^{1,p}(X)$ is $W^{1,p}$-dense in $W^{1,p}(X)$. 
  For this, we show that 
 any $u\in W^{1,p}(X)$ can be $\|\cdot\|_{W^{1,p}}$-approximated by a sequence of $
 W^{1,p}(X)\cap L^{\infty}(X;\m)\cap L^2(X;\m)$. 
 By using the distance function ${\sf d}:X^2\to[0,+\infty[$, we can construct functions $\rho_n\in  {\rm Lip}(X)$ such that 
 $0\leq \rho_n\leq 1$ on $X$, $\rho_n=1$ on $B_n(o)$,  $\rho_n=0$ on $X\setminus B_{n+1}(o)$ and ${\rm lip}(\rho_n)=|D\rho_n|_p=|D\rho_n|_2\leq1$ $\m$-a.e.~on $X$. Then $u_n:=(-n)\lor (u\rho_n)\land n\in W^{1,p}(X)\cap L^{\infty}(X;\m)\cap L^2(X;\m)$ forms an $\|\cdot\|_{W^{1,p}}$-approximation to $u$. For any $u\in W^{1,p}(X)\cap L^{\infty}(X;\m)\cap L^2(X;\m)$, we see $P_tu\in {\rm Test}(X)\cap W^{1,p}(X)$ in view of 
 $|DP_tu|_p^p\leq e^{-pKt}P_t|Du|_p^p$ by \cite[Proposition~3.1]{GigliHan} and  
 $|DP_tu|_2^2\leq \frac{e^{K^-t}}{2t}\|u\|_{L^{\infty}(X;\m)}$ by \cite[Proposition~6.8]{ERST}. 
 From this, $\{P_tu\}$ is a $\|\cdot\|_{W^{1,p}}$-bounded sequence in ${\rm Test}(X)\cap W^{1,p}(X)$.  
 Thanks to the uniform convexity of $(W^{1,p}(X),\|\cdot\|_{W^{1,p}})$ and Banach-Saks type theorem by Kakutani~\cite{Kakutani}, any $u\in W^{1,p}(X)\cap L^{\infty}(X;\m)\cap L^2(X;\m)$ can be $\|\cdot\|_{W^{1,p}}$-approximated by a sequence in ${\rm Test}(X)\cap W^{1,p}(X)$. Thus the claim is confirmed. 
 
Applying \cite[Theorem~3.4]{GigliHan} again, we see the inclusion
 ${\rm Test}(X)\cap W^{1,p}(X)\subset \mathscr{D}_{1,p}\subset H^{1,p}(X)$. Therefore we obtain the 
 inclusion $W^{1,p}(X)\subset H^{1,p}(X)$ with $\|u\|_{H^{1,p}}=\|u\|_{W^{1,p}}$ for $u\in W^{1,p}(X)$.     
\end{proof}
}
\end{example}

\begin{example}[Configuration space over metric measure spaces]
{\rm Let $(M,g)$ be a complete smooth Riemannian manifold without boundary.   
The configuration space $\Upsilon$ over $M$ is the space of all locally finite point measures, that is, 
\begin{align*}
\Upsilon:=\{\gamma\in\mathcal{M}(M)\mid \gamma(K)\subset\N\cup\{0\}\quad \text{ for all compact sets}\quad K\subset M\}. 
\end{align*}
In the seminal paper Albeverio-Kondrachev-R\"ockner~\cite{AKR}, they identified a natural geometry on $\Upsilon$ by lifting the geometry of $M$ to $\Upsilon$. In particular, there exists a natural gradient $\nabla^{\Upsilon}$, divergence ${\rm div}^{\Upsilon}$ and 
Laplace operator $\Delta^{\Upsilon}$ on $\Upsilon$. It is shown in \cite{AKR} that the Poisson point measure $\pi$ on 
$\Upsilon$ is the unique (up to intensity) measure on $\Upsilon$ under which the gradient and divergence become 
dual operator in $L^2(\Upsilon;\pi)$. Hence, the Poisson measure $\pi$ is the natural volume measure on $\Upsilon$ 
and $\Upsilon$ can be seen as an infinite dimensional Riemannian manifold. The canonical Dirichlet form 
\begin{align*}
\mathscr{E}(F)=\int_{\Upsilon}|\nabla^{\Upsilon}F|_{\gamma}^2\pi(\d\gamma)
\end{align*}
 constructed in \cite[Theorem~6.1]{AKR} is quasi-regular and strongly local
 and it induces the heat semigroup $T_t^{\Upsilon}$ and a Brownian motion ${\bf X}^{\Upsilon}$ on $\Upsilon$ which can be identified with the independent infinite particle process. 
By Proposition~\ref{prop:closability}, one can construct the $(1,p)$-Sobolev space $(H^{1,p}(\Upsilon),\|\cdot\|_{H^{1,p}})$ for $p\in[2,+\infty[$ based on the quasi regular strongly local 
 Dirichlet form $(\mathscr{E}^{\Upsilon}, D(\mathscr{E}^{\Upsilon}))$ on $L^2(\Upsilon;\mu)$. 
 
 More generally, in Dello Schiavo-Suzuki~\cite{DelloSuzuki:ConfigurationI}, configuration space $\Upsilon$ over proper complete and separable metric space $(X,{\sf d})$ is considered. The configuration space $\Upsilon$ is endowed with the \emph{vague topology} $\tau_V$, induced by duality with continuous compactly supported functions on $X$, and with a reference Borel probability measure $\mu$ satisfying \cite[Assumption~2.17]{DelloSuzuki:ConfigurationI}, commonly understood as the law of a proper point process on $X$. In \cite{DelloSuzuki:ConfigurationI}, 
 they constructed the strongly local Dirichlet form $\mathscr{E}^{\Upsilon}$ defined to be the $L^2(\Upsilon;\mu)$-closure of a certain pre-Dirichlet form on a class of certain cylindrical functions and prove its quasi-regularity for a wide class of measures $\mu$ and base spaces (see \cite[Proposition~3.9 and Theorem~3.45]{DelloSuzuki:ConfigurationI}). 
 
 By Proposition~\ref{prop:closability}, one can construct the $(1,p)$-Sobolev space $(H^{1,p}(\Upsilon),\|\cdot\|_{H^{1,p}})$ for $p\in[2,+\infty[$ based on the quasi regular strongly local 
 Dirichlet form $(\mathscr{E}^{\Upsilon}, D(\mathscr{E}^{\Upsilon}))$ on $L^2(\Upsilon;\mu)$. 
 The contents of Theorem~\ref{thm:potential} are not known for the $(1,p)$-Sobolev spaces 
 based on $(\mathscr{E}^{\Upsilon}, D(\mathscr{E}^{\Upsilon}))$.
 
}
\end{example}

\bigskip
\noindent
{\bf Acknowledgment.} The author would like to thank Dr.~Ryosuke Shimizu for showing  
the references \cite{BV2005}, \cite{HinzRoecknerTepryaev} and his recent papers 
\cite{Shimizu,MurganShiumizu} during the preparation of this paper.  
He also thanks to Professor Lucian Beznea for showing his recent result \cite{Beznea} and its related literatures \cite{Beusekom,HoJacob2004,JacobSchlling2004}. Professor Michael Hinz kindly tells us his 
previous work \cite{HinzKochMeinert}.  

\bigskip

\noindent
{\bf Conflict of interest.} The authors have no conflicts of interest to declare that are relevant to the content of this article.

\bigskip
\noindent
{\bf Data Availability Statement.} Data sharing is not applicable to this article as no datasets were generated or analyzed during the current study.


\providecommand{\bysame}{\leavevmode\hbox to3em{\hrulefill}\thinspace}
\providecommand{\MR}{\relax\ifhmode\unskip\space\fi MR }
\providecommand{\MRhref}[2]{%
  \href{http://www.ams.org/mathscinet-getitem?mr=#1}{#2}
}
\providecommand{\href}[2]{#2}

\end{document}